\documentclass[final]{siamart251216}

\usepackage{showkeys}

\usepackage{tikz}
\usepackage{pgf}
\usetikzlibrary{arrows}
\usetikzlibrary{arrows.meta}
\usetikzlibrary{calc}
\usetikzlibrary{decorations.pathmorphing}
\usepackage{xfrac}    

\usepackage{faktor}
\usepackage{soul}
\setstcolor{red}
\setulcolor{red}

\usepackage{rotating} 

\usepackage{mathrsfs}
\DeclareMathAlphabet{\mathpzc}{OT1}{pzc}{m}{it}

\usepackage{color}
\usepackage{amsmath}
\usepackage{graphicx}
\usepackage{graphics}
\usepackage{float}
\usepackage{amsfonts}
\usepackage{amssymb,bbm}%
\usepackage{latexsym}
\usepackage{psfrag}
\usepackage{subfigure}
\usepackage{accents}

\newtheorem{remark}[theorem]{Remark}
\newtheorem{example}[theorem]{Example}

\newcommand{\cH}{{\cal H}}

\newcommand{\cD}{{\cal D}}

    \newcommand\quotient[2]{
        \mathchoice
            {
                \text{\raise1ex\hbox{$#1$}\Big/\lower1ex\hbox{$#2$}}%
            }
            {
                #1\,/\,#2
            }
            {
                #1\,/\,#2
            }
            {
                #1\,/\,#2
            }
    }

\newcommand{\beq}{\begin{equation}}
\newcommand{\eeq}{\end{equation}}
\newcommand{\beqs}{\begin{equation*}}
\newcommand{\eeqs}{\end{equation*}}
\newcommand{\bit}{\begin{itemize}}
\newcommand{\eit}{\end{itemize}}
\newcommand{\ben}{\begin{enumerate}}
\newcommand{\een}{\end{enumerate}}
\newcommand{\bal}{\begin{align}}
\newcommand{\eal}{\end{align}}
\newcommand{\bals}{\begin{align*}}
\newcommand{\eals}{\end{align*}}
\newcommand{\bse}{\begin{subequations}}
\newcommand{\ese}{\end{subequations}}
\newcommand{\bpr}{\begin{proposition}}
\newcommand{\epr}{\end{proposition}}
\newcommand{\bre}{\begin{remark}}
\newcommand{\ere}{\end{remark}}
\newcommand{\bpf}{\begin{proof}}
\newcommand{\epf}{\end{proof}}
\newcommand{\ble}{\begin{lemma}}
\newcommand{\ele}{\end{lemma}}
\newcommand{\bco}{\begin{corollary}}
\newcommand{\eco}{\end{corollary}}
\newcommand{\bex}{\begin{example}}
\newcommand{\eex}{\end{example}}
\newcommand{\bth}{\begin{theorem}}
\newcommand{\enth}{\end{theorem}}

\newcommand{\Rea}{\mathbb{R}}
\newcommand{\Com}{\mathbb{C}}

\def\XXint#1#2#3{{\setbox0=\hbox{$#1{#2#3}{\int}$}
     \vcenter{\hbox{$#2#3$}}\kern-.5\wd0}}

\usepackage{hyperref}
\counterwithin{figure}{section}
\definecolor{myblue}{rgb}{0,0,0.6}
\hypersetup{colorlinks=true,linkcolor=myblue,citecolor=myblue,filecolor=myblue,urlcolor=myblue}

\allowdisplaybreaks[4]

\newcommand{\tas}{\text{ as }}
\newcommand{\tand}{\text{ and }}

\newcommand{\vertiii}[1]{{\left\vert\kern-0.25ex\left\vert\kern-0.25ex\left\vert #1
    \right\vert\kern-0.25ex\right\vert\kern-0.25ex\right\vert}}

\definecolor{jwcol}{RGB}{27, 137, 18}  

\definecolor{dalcol}{rgb}{0.8,0,0}

\definecolor{escol}{rgb}{0,0,0.8}
\definecolor{estcol}{rgb}{0,0.5,0}
\definecolor{esnewcol}{rgb}{0,0.5,0}

\newcommand{\dist}{{\rm dist}}
\newcommand{\supp}{\operatorname{supp}}

\newcommand{\Op}{{\rm Op}}

\newcommand{\bb}{{\rm bb}}
\newcommand{\tr}{{\rm tr}}
\newcommand{\start}{{\rm st}}
\newcommand{\lin}{{\rm lin}}

\newcommand{\e}{\epsilon}

\newcommand{\mc}[1]{\mathcal{#1}}

\newtheorem{informaltheorem}[theorem]{Informal Theorem}

\makeatletter
\newcommand{\settheoremtag}[1]{
  \let\oldthetheorem\thetheorem
  \renewcommand{\thetheorem}{#1}
  \g@addto@macro\endtheorem{
    \addtocounter{theorem}{-1}
    \global\let\thetheorem\oldthetheorem}
  }
\makeatother

\usetikzlibrary{positioning}
\usepackage{lscape}

\definecolor{jeffColor}{RGB}{102, 0, 204}

\title{Cartesian PML truncation for the Helmholtz equation is exponentially accurate at high frequency}
\author{
Martin~Averseng\thanks{CNRS, Laboratoire Angevin de Recherche Mathématique, Universit\'e d'Angers, 49045 Angers, France, \tt martin.averseng@univ-angers.fr },
\and
Jeffrey~Galkowski\thanks{Department of Mathematics, University College London, London, WC1H 0AY, UK,   \tt J.Galkowski@ucl.ac.uk}
, 
\and
Euan~A.~Spence\thanks{Department of Mathematical Sciences, University of Bath, Bath, BA2 7AY, UK, \tt E.A.Spence@bath.ac.uk }
}
\date{\today}

\begin{document}
\maketitle

\begin{abstract}
For a wide class of Helmholtz scattering problems (namely, those fitting in the black-box framework of \cite{SjZw:91}) we prove that the error incurred by approximating the radiation condition by a Cartesian perfectly-matched layer (PML) decreases exponentially in 
the width of the PML, the strength of the PML scaling, and the frequency.
\end{abstract}



\section{Introduction}

Since the work of Berenger~\cite{Be:94}, perfectly matched layers (PMLs) have become a standard tool in the numerical simulation of frequency-domain wave problems such as the Helmholtz equation. This method approximates the solution of a scattering problem in an unbounded domain by making a complex change of variables in a layer away from the region of interest and truncating the problem with a zero Dirichlet condition. 

It has long been understood that for a radial PML, the error in the truncation decreases exponentially with the PML width and the strength of the PML scaling \cite[Theorem 2.1]{LaSo:98}, \cite[Theorem A]{LaSo:01}, \cite[Theorem 5.8]{HoScZs:03}, \cite[Theorem 3.4]{BrPa:07}. More recently, \cite[Theorem 1.5]{GLS2} proved that the error in the truncation decreases exponentially in the PML width, the strength of the scaling, \emph{and} the frequency. 

Cartesian PML is a very popular way of approximating the radiation condition in numerical-analysis applications using rectangular meshes.
However, Cartesian PML is more difficult to analyse than radial PML -- this is because the damping in the Cartesian PML operator only occurs in one coordinate direction at a time; i.e., a ray moving inside the PML and perpendicular to the direction of the scaling is not damped. 

The papers \cite{Ki:09, KiPa:10, KiPa:10a} proved that the error in Cartesian PML truncation is exponentially small in the PML width, with then \cite{BrPa:13} proving that the error is exponentially small in both the PML width and the strength of the scaling.

Regarding the frequency-dependence of the Cartesian PML truncation error:~using propagation of singularities (see, e.g., \cite[Appendix E]{DyZw:19}) it is relatively straightforward to show that the error in Cartesian PML truncation (and indeed, truncation with any complex absorption) is super-polynomially small in frequency (i.e., given $N>0$ there exists $C_N$ such that the error is $\leq C_N k^{-N}$ for all sufficiently large frequencies $k$). These arguments are written out -- and this error bound proved -- in the case of scattering by smooth variable coefficients (where these propagation arguments are the least technical) in \cite[Theorem 4.7]{GGGLS2}.
In the case of no scattering, 
and for Cartesian PML where the scaling function is linear at the PML boundary, 
the error was proved to be exponentially small in the PML width, the scaling strength, and the frequency in \cite[Lemma 3.4]{ChXi:13}.
Very recently, this result was extended to the case of scattering by a star-shaped Dirichlet obstacle \cite[Theorem 5.6]{wang2026wavenumber}.

The main result of the present paper is informally summarised as the following.

\begin{informaltheorem}
Consider a Helmholtz scattering problem fitting in the black-box framework of \cite{SjZw:91}; recall that this includes scattering by Lipschitz Dirichlet and Neumann obstacles and/or Lipschitz penetrable obstacles \cite[\S 2.2]{LSW1}. 

Suppose that the solution operator is polynomially bounded in frequency -- as occurs for ``most'' frequencies by \cite[Theorem 3.3]{LSW1}  in general, and always for nontrapping scatterers including star-shaped obstacles. 

Then, for Cartesian PML truncation where the scaling function is linear at the PML boundary, the error decreases exponentially in the width of the PML, the strength of the scaling, and the frequency.
\end{informaltheorem}

\section{Statement of the main result}

\subsection{Recap of the black-box framework}
We consider $P$ a (semiclassically scaled) Helmholtz scattering operator in the black-box framework of Sjostrand--Zworski \cite{SjZw:91}. We give a short recap here, and refer to \cite[Chapter 4]{DyZw:19} for the details.
In this framework, $P= -k^{-2}\Delta$ outside a ball and is equal to some self-adjoint operator inside the ball. Black-box operators include 
scattering by Dirichlet, Neumann, and penetrable obstacles (even for Lipschitz boundaries -- see \cite[\S2.2]{LSW1}), and scattering by inhomogeneous media. 

For $i=1,\ldots, d$, let 
$R_{\bb,i}$ be such that the black box is contained in 
$Q_{\bb}:=\prod_{i=1}^d (-R_{\bb,i},R_{\bb,i})$. Let 
$$
\cH:=\cH_0\oplus L^2(\mathbb{R}^d\setminus Q_{\bb}),
$$
where $\cH_0$ is a Hilbert space such that $P:\cH\to \cH$ is an unbounded operator with domain $\cD$. (Recall that if $u=(0,u_1)$ with $\supp u_1\cap \overline{Q_{\bb}}=\emptyset$, then $u\in\mc{D}\Leftrightarrow u_1\in H^2(\mathbb{R}^d\setminus Q_{\bb})$ and, furthermore, $P(0,u_1)=(0,-k^{-2}\Delta u_1)$.) We use the following notation for multiplication operators on $\cH$: for $u=(u_1,u_2)\in \cH$ and $\chi\in L^\infty(\mathbb{R}^d)$ with $\supp (1-\chi)\cap Q_{\bb}=\emptyset$ we set  $\chi u:=(u_1,\chi u_2)$. 

In addition to the usual black-box assumptions, we require the following unique-continuation-type property: 
\beq\label{e:ucp}
\text{ if } u=(u_1,u_2)\in\cD,\, (P-1)u=0,\text{ and $u_2$ is compactly supported, then $u=0$}
\eeq
(note that this is the case for the scattering examples listed above; see the discussion and references in \cite[\S4.2]{GSAN}). This unique-continuation property combined with Rellich's theorem (see \cite[Theorem 4.17]{DyZw:19}) implies uniqueness of outgoing solutions to the equation $(P-1)u= g$ for compactly supported $g\in \cH$.

We can now define the limiting absorption resolvent $R_P:= \lim_{\e\to 0^+}(P- 1-i\e)^{-1}$  and recall that $\chi R_P \chi : \cH\to \cD$ \cite[Theorem 4.4]{DyZw:19}. 

\subsection{Definition of the scaled operators}\label{s:def}

Given $R_{\bb, j}$, $j=1,\ldots,d$, let $R_{\bb, j}< R_{\start,j} < R_{\lin, j}< R_{\tr, j}$ (where ``$\bb$'' stands for ``black-box'', ``$\start$" stands for ``start", ``$\lin$" stands for ``linear", and ``$\tr$" stands for ``truncation"), and let $Q_{\star} := \prod_{j = 1}^d (-R_{\star,j},R_{\star,j})$ with $\star$ standing for either of the previous subscripts. 
We consider a Cartesian totally real deformation (see \cite[Section 4.5]{DyZw:19}), $\Gamma$, of $\mathbb{R}^d$ given by
\beq\label{e:gamma}
\gamma_\theta:x\mapsto x+i(\tan \theta) \nabla F(x),\quad \text{ where } \quad
F(x):=\sum_{j=1}^d F_j(x_j),
\eeq
with $F_{j}\in C^\infty(\mathbb{R})$, $F_j\equiv 0$ on $[-R_{\bb,j},R_{\bb,j}]$ and $F_j'(x)= x$ for $|x|\geq R_{\lin,j}$, $\{F_j''>0\} =\{|x|>R_{\start,j}\}$
and $F_j$ convex. 
(The notation ``\lin" is used since when $|x_j|\geq R_{\lin,j}$ the scaling/stretching in the $j$th coordinate is linear by \eqref{e:gamma}.)
We identify $\Gamma$ with $\mathbb{R}^d$ using the map $\gamma_\theta$.

Let 
$$
\Delta_\theta:= \sum_{j=1}^d \bigg(
\frac{1}{1+i(\tan \theta) F''_j(x_j)}\partial_{  x_j}
\bigg)^2,
$$
and let
$$
P_{\theta,\rm free}-1 := -k^{-2}\Delta_\theta-1;
$$ 
i.e., $P_{\theta,\rm free}-1 $ is the free, scaled Helmholtz operator. 
Let
$
(P_{\theta}-1)
$
be the operator $P$ deformed to $\Gamma$, and let 
$
(P^D_\theta-1)
$
be the Dirichlet realization of $P_\theta-1$ on $Q_{\tr}$
More precisely, 
let 
\begin{equation*}
\begin{gathered}
\mc{H}(Q_{\tr}):=\mc{H}_{0}\oplus L^2(Q_{\tr} \setminus Q_{\bb}),\\
\mc{D}(Q_{\tr}):=\Big\{u\in \mc{H}(Q_{\tr}) \,:\, 
{\text{for all}\,\chi\in C_c^\infty(Q_{\start}),\,\chi\equiv 1\text{ on }Q_{\bb},
\chi u\in \mc{D},
}\\
\hspace{5cm} 
 { (1-\chi)u \in H_0^1(Q_{\tr}),\, -\Delta_\theta ((1-\chi)u)\in L^2(Q_{\tr})}\Big\},
\end{gathered}
\end{equation*}
and then define
$$
P^{D}_\theta u:= P(\chi u)+ (-k^{-2}\Delta_\theta)((1-\chi )u)
$$
so that 
$P^{D}_\theta:\mc{H}(Q_{\tr})\to \mc{H}(Q_{\tr})$ with domain $\mc{D}(Q_{\tr})$
and norm
\beq\label{e:Dnorm}
\|u\|_{\mc{D}(Q_{\tr})}^2=\|u\|_{\mc{H}(Q_{\tr})}^2+\|P_\theta u\|_{\mc{H}(Q_{\tr})}^2,\qquad u\in \mc{D}(Q_{\tr}).
\eeq

\subsection{The main result}

\begin{theorem}[Exponential accuracy of Cartesian PML truncation]\label{t:main}
Let $\chi\in C_c^\infty( Q_{\start})$ with 
 $\supp(1-\chi)\cap Q_{\bb}=\emptyset$,
and suppose that there are $\mathcal{J}\subset (0,\infty)$ and $M>0$ such that for all $k_0>0$, there is $C>0$ such that for $k\in\mathcal{J}$, $k \geq k_0$
$$
\|\chi R_P\chi\|_{\cH\to \cH}\leq Ck^M.
$$
Then, for all $\delta,\e>0$ there are $C,c,k_1>0$ such that for 
$R_{\start,i}+\e<R_{\lin}$, $R_{\lin}+\e<R_{\tr,i}$, for all 
$\theta\in[\delta,\pi/2-\delta]$, and all $k\in \mathcal{J}$ with $k\geq k_1$,
$(P^D_\theta-1)^{-1}: \cH(Q_{\tr})\to \cD(Q_{\tr})$ exists with
\beq\label{e:main}
\big\|\chi \big(R_P-(P^D_\theta-1)^{-1}
\big)\chi\big\|_{\cH(Q_{\tr})\to \cD(Q_{\tr})}\leq C\exp\big(- ck\tan \theta\min_i\{R_{\tr,i}-R_{\start,i}\}
\big).
\eeq
\end{theorem}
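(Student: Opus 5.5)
Recall first that, by complex scaling (\cite[Theorem 4.37]{DyZw:19}), the deformed resolvent $(P_\theta-1)^{-1}$ on all of $\mathbb{R}^d\simeq\Gamma$ exists and satisfies $\chi(P_\theta-1)^{-1}\chi=\chi R_P\chi$ for $\chi$ supported where $F\equiv 0$. The plan is therefore to compare $(P_\theta-1)^{-1}$ with $(P^D_\theta-1)^{-1}$ using a cutoff/parametrix argument. Let $u=(P_\theta-1)^{-1}\chi f$. Choose $\psi\in C_c^\infty(Q_{\tr})$ with $\psi\equiv1$ on $Q_{\tr-\e/2}$ (componentwise). Then $\psi u\in\cD(Q_{\tr})$ and
\[
(P^D_\theta-1)(\psi u)=\chi f+k^{-2}[-\Delta_\theta,\psi]u ,
\]
where the commutator is supported in $Q_{\tr}\setminus Q_{\tr-\e/2}$. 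Hence
\[
\chi(R_P-(P^D_\theta-1)^{-1})\chi f=\chi(P^D_\theta-1)^{-1}k^{-2}[\Delta_\theta,\psi]u .
\]
The task therefore splits into two estimates. The first is an exponential decay bound for $u$ (with one semiclassical derivative) near $\partial Q_{\tr}$, say $\|u\|_{H^1_k(Q_{\tr}\setminus Q_{\tr-\e/2})}\le Ck^{M'}e^{-ck\tan\theta\,\min_i(R_{\tr,i}-R_{\start,i})}\|f\|$. The second is a polynomial bound $\|(P^D_\theta-1)^{-1}\|\le Ck^{M''}$, together with its existence. Polynomial losses are then absorbed into the exponential by shrinking $c$ and enlarging $k_1$.

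For the decay of $u$ outside $Q_{\start}$, the natural approach is as follows. There $u$ solves the free scaled equation $(-k^{-2}\Delta_\theta-1)u=0$, and the scaling is separable across coordinates. In the region $|x_j|>R_{\start,j}$ I would use the fact that $u$ restricted to $\{|x_j|\ge R_{\start,j}\}$ is the analytic continuation, in the variable $x_j$, of the outgoing solution along the contour $x_j+i\tan\theta F_j'(x_j)$. Taking a partial Fourier transform in the remaining variables, or more robustly performing a weighted (Carleman/Agmon-type) energy estimate with weight $e^{k\phi(x_j)}$, $\phi'\sim\tan\theta\,F_j'$, shows that the propagating part decays like $e^{-k\tan\theta\,(F_j'\text{-integral})}$. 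The evanescent Fourier modes decay even faster. The linearity of $F_j'$ for $|x_j|\ge R_{\lin,j}$ makes the symbol $\xi_j^2/(1+i\tan\theta)^2$ uniformly elliptic in a sector, so the estimate is uniform for $\theta\in[\delta,\pi/2-\delta]$. A cleaner route to the same conclusion is to express $u$ outside $Q_{\start}$ through the free outgoing Green's function on $\Gamma$ applied to data on $\partial Q_{\start}$, and to bound the complex-continued Hankel-type kernel, which decays like $e^{-k\,\mathrm{Im}\,\gamma_\theta\cdot\omega}$ in the appropriate direction.

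For invertibility of $P^D_\theta-1$, I would again build a parametrix. Glue $(P_\theta-1)^{-1}$ (polynomially bounded near $Q_{\start}$ by hypothesis) with an inverse of the free scaled Dirichlet problem localized near $\partial Q_{\tr}$. The latter should be invertible with $O(1)$ bounds via the coercivity of $e^{i\alpha}(-\Delta_\theta-k^2)$ in the layer $|x_j|>R_{\lin,j}$, where $\theta$ is constant and the operator is a rotated Laplacian. The errors are commutator terms supported in the layer, of size $O(k^{M}e^{-ck\tan\theta\,\e})$ by the decay estimate above, so a Neumann series gives existence and a polynomial bound for $k\ge k_1$. The main obstacle is the uniform exponential decay of the full-space scaled solution near the corners/edges of $\partial Q_{\tr}$, where some coordinates are unscaled: rays moving parallel to a face are undamped. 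The plan is to handle this by doing the weighted estimate one coordinate at a time, with weight $\sum_j\phi(x_j)$, and verifying positivity of the resulting quadratic form using convexity of each $F_j$.
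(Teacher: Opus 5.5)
There is a genuine gap in the step that gives existence and a polynomial bound for $(P^D_\theta-1)^{-1}$.

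\textbf{The coercivity claim is false for Cartesian PML.} The operator is a rotated Laplacian only where $|x_j|\geq R_{\lin,j}$ for \emph{all} $j$, i.e.\ in the corners. Near a face, say $x_1$ close to $R_{\tr,1}$ with $|x_j|<R_{\start,j}$ for $j\geq 2$ (and $d\geq 2$), the operator is $-k^{-2}\big((1+i\tan\theta)^{-2}\partial_{x_1}^2+\sum_{j\geq 2}\partial_{x_j}^2\big)-1$. Its symbol $\cos^2\theta\,e^{-2i\theta}\xi_1^2+|\xi'|^2-1$ vanishes at $\xi_1=0$, $|\xi'|=1$; these are exactly the undamped rays parallel to the face. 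So no rotation $e^{i\alpha}$ yields a coercive form. On the model slab $\{R_{\tr,1}-\e<x_1<R_{\tr,1}\}\times\Rea^{d-1}$ with Dirichlet conditions, separation of variables shows the inverse has norm of order $(k\e)^2$, not $O(1)$.

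\textbf{The gluing errors are not all exponentially small.} Your decay estimate needs the source in $Q_{\start}$ and the target in $Q_{\tr}\setminus Q_{\lin}$. In a two-piece parametrix, at least one commutator term has source and target on either side of the same transition region of the partition of unity, with no such separation. For instance, if the transition lies in a face-layer, then for $x-y$ parallel to the face $\langle x-y,\nabla F(x)-\nabla F(y)\rangle=0$, and the free kernel does not decay. Moreover, any gluing that uses $(P_\theta-1)^{-1}$ beyond $Q_{\start}$ needs a bound on it in the scaled region, which the hypothesis does not give directly. The paper derives this in Lemma~\ref{l:inherit0} from the free scaled resolvent bound, Theorem~\ref{t:free}, which itself needs a propagation argument (Lemma~\ref{l:flow}).

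\textbf{The missing idea.} Replace coercivity by an a priori estimate from propagation of singularities, Lemma~\ref{l:propagate}. First reflect $u$ across $\partial Q_{\tr}$; this is where linearity of $F_j'$ near $\partial Q_{\tr}$ is used, so that only boundaryless propagation results are needed. Then check that at any characteristic point over the layer with $x_j>R_{\start,j}$ one has $\xi_j=0$. Hence the backward $H_{\Re p_{\theta,\rm fr}}$-flow keeps $x_j$ fixed and moves in some direction $i$ with $\xi_i\neq 0$ until $F_i''(x_i)>0$, where the symbol is elliptic, before leaving a neighbourhood of the boundary. This gives
\[
\|\psi u\|_{\cD(Q_{\tr})}\leq C\big(k\|(P^D_\theta-1)u\|_{\cH(Q_{\tr})}+k^{-N}\|u\|_{\cH(Q_{\tr})}\big).
\]
Write $u=\chi u+(1-\chi)u$, bound $\chi u$ through $(P_\theta-1)^{-1}$, and control its commutator term again by this estimate. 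The $k^{-N}$ remainder is then absorbed thanks to the polynomial bound, and the adjoint version gives invertibility (Lemma~\ref{l:inherit}).

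\textbf{You have misplaced the difficulty.} The decay step is the easy one. For $y\in Q_{\start}$ and $x\in Q_{\tr}\setminus Q_{\lin}$, some coordinate has $|x_i|\geq R_{\lin,i}>R_{\start,i}>|y_i|$. Convexity of $F$ then gives $\langle x-y,\nabla F(x)-\nabla F(y)\rangle\geq c|x-y|^2$, and Lemma~\ref{l:sneaky} converts this into $\Im r(\gamma_\theta(x)-\gamma_\theta(y))\geq c\tan\theta\,|x-y|$. Thus your ``cleaner route'' through the Hankel kernel is exactly Lemma~\ref{l:decay} and needs no weighted estimate. The undamped parallel rays only matter in the step where you assumed coercivity. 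Otherwise, your reduction via $\psi u$ and the commutator near $\partial Q_{\tr}$ matches the paper.
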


We emphasise that, in the case of 
scattering by Dirichlet, Neumann, and penetrable Lipschitz obstacles and scattering by variable coefficients, for all $k_0>0$, there is $C>0$ such that for $k>k_0$, 
$$
\|u\|_{H_k^1}^2:=\|u\|^2_{L^2}+\|k^{-1}\nabla u\|^2_{L^2}\leq C\|u\|^2_{\cD(Q_\tr)};$$
i.e., \eqref{e:main} controls the $k$-weighted $H^1$ error. 

\paragraph{Overview of the proof}


 Let $g \in \cH$, $\chi$ a cutoff supported in the unscaled region $Q_{\start}$, and consider $u_\theta,u_\theta^D$ the solutions of
$$(P_\theta - 1)u_\theta = (P_\theta^D - 1) u_\theta^D = \chi g.$$
By the well-known property that $u_\theta$ equals the scattering solution $R_P\chi g$ in the unscaled region (see Theorem \ref{t:agree} below), it suffices to show that $u_\theta$ and $u_\theta^D$ agree up to exponentially small errors in the unscaled region. 

To bound the error $E:= \psi u_\theta-u_\theta^D$ (where $\supp \chi \subset \{\psi \equiv 1\} \subset \supp \psi \subset Q_{\tr}$), we observe that it satisfies the PDE
$$(P_\theta^D-1) E = [P_\theta,\psi] u_\theta = [P_\theta,\psi] \phi u_\theta$$
for some cutoff $\phi \equiv 0$ near the black-box and $\phi \equiv 1$ outside of $Q_{\start}$. 
We now use the fact that any Helmholtz solution away from the scatterer can be written in terms of the free resolvent. Indeed, 
$$ (P_{\theta,{\rm free}} - 1)\phi u_\theta =  (\phi \chi g + [P_{\theta,\rm {free}},\phi]u_\theta)=:\tilde{g} \qquad \Longrightarrow\qquad \phi u_\theta= (P_{\theta,{\rm free}} - 1)^{-1} \widetilde{g}\,,$$
and thus 
$$E = (P_\theta^{D}-1)^{-1} [P_\theta,\psi] (P_{\theta,{\rm free}}-1)^{-1} \widetilde{g}.$$
Importantly, $\widetilde{g}$ is supported in the unscaled region, and $[P_\theta,\psi]$ is supported near the truncation boundary.

Theorem~\ref{t:main}
then 
follows from two additional properties.
\begin{enumerate}
\item Exponential decay  of $\chi_1(P_{\theta,{\rm free}}-1)^{-1}\chi_2$ with respect to the distance between the supports of $\chi_1$ and $\chi_2$.
\item An estimate on $(P_\theta^D-1)^{-1}$ of the form $\|(P_\theta^D - 1)^{-1}\|\leq Ck^M$.

\end{enumerate}


Property 1 follows from the decaying behaviour of the free Green's function in the complex-scaled coordinates (see Lemma \ref{l:decay}).

Property 2 follows by first bounding $(P_\theta^D-1)^{-1}$ in terms of $(P_\theta-1)^{-1}$ (see Lemma \ref{l:inherit0}), and then bounding the latter in terms of $\chi R_P\chi$ (see Lemma \ref{l:inherit}), which is polynomially bounded by assumption. This first bound requires an estimate on $(P_{\theta,\rm free}-1)^{-1}$ (see Theorem \ref{t:free}).

The bound on $(P_\theta^D-1)^{-1}$ in terms of $(P_\theta-1)^{-1}$ follows from an a priori estimate on solutions to 
$$
(P_\theta^D-1)u=g
$$
in a neighborhood of the truncation boundary.  
For radial PMLs, this type of estimate follows easily from coercivity of the quadratic on functions supported near the truncation boundary (see \cite[Lemma 4.4]{GLS2}). However, because Cartesian PMLs are not coercive near the truncation boundary (coercivity only occurs in the ``corners'' of the scaling region), this estimate is now more delicate  and relies on a propagation of semiclassical singularities argument. 
Namely, 
at any point $(x,\xi)$ in phase space with $x$ near the truncation boundary, \emph{either} the semiclassical symbol of the operator is non-zero -- and hence the operator is elliptic --\emph{or} the backward Hamiltonian trajectory escapes to a region of ellipticity in finite time; see Figure \ref{fig:Martin_is_better_than_chatGPT}.

\begin{figure}
    \centering
    \begin{tikzpicture}[x=0.9cm,y=0.9cm,font=\small,
  >={Stealth[length=2.2mm]},line cap=round,
  interface/.style={black!75,thin},
  momentum/.style={->,thick,orange!85!black}]
  \definecolor{scalecyan}{RGB}{115,210,222}
  \definecolor{scaleyellow}{RGB}{247,228,148}
  \definecolor{scalegreen}{RGB}{177,205,138}
  \fill[scalecyan] (-5,-3.3) rectangle (-1.8,3.3);
  \fill[scaleyellow] (-5,1.3) rectangle (5,3.3);
  \fill[scalegreen] (-5,1.3) rectangle (-1.8,3.3);

  \draw[interface] (-1.8,-3.3) -- (-1.8,3.3);
  \draw[interface] (1.8,-3.3) -- (1.8,3.3);
  \draw[interface] (-5,1.3) -- (5,1.3);
  \draw[interface] (-5,-1.3) -- (5,-1.3);
  \node[above] at (-1.8,3.3) {$-R_{\mathrm{st},1}$};
  \node[above] at (1.8,3.3) {$R_{\mathrm{st},1}$};
  \node[right] at (5,1.3) {$R_{\mathrm{st},2}$};
  \node[right] at (5,-1.3) {$-R_{\mathrm{st},2}$};
  \node[above,scaleyellow!75!red] at (3.8,3.3) {Scaling in $x_2$};
  \node[below,text=cyan!65!black] at (-3.4,-3.45) {Scaling in $x_1$};

  \draw[violet,thick] (-0.55,-0.5) rectangle (0.55,0.5);
  \node[text=violet] at (0,0) {$Q_{\mathrm{bb}}$};

  \node[align=center] at (-0.35,2.8) {not elliptic in\\$\qquad$direction};
  \draw[<->,thick] (0.85,3) -- (1.4,3);
  \node[align=center] at (-3.75,0.55) {not elliptic in\\\quad direction};
  \draw[<->,thick] (-2.4,0.37) -- (-2.4,1);

  \draw[magenta!70!black,thick] (-3.3,1.95) -- (0.15,1.95);
  \fill[orange!85!black] (0.15,1.95) circle (1.5pt);
  \draw[momentum] (0.15,1.95) -- (0.8,1.95);
  \node[below,text=orange!85!black,scale=.8] at (0.1,1.9) {$(x_0,\xi_0)$};
  \fill[orange!85!black] (-3.3,1.95) circle (1.5pt);
  \draw[momentum] (-3.3,1.95) -- (-2.65,1.95);
  \node[below,text=orange!70!black,scale =.8] at (-3.3,1.9)
    {$\exp(-tH_{\Re p_{\theta,\rm{fr}}})(x_0,\xi_0)$};

  \fill[black!55] (-3.95,-0.55) circle (1.5pt);
  \draw[->,thick,black!55] (-3.95,-0.55) -- (-3.3,-0.15);
  \node[below,text=black!65] at (-3.65,-0.65) {$(x_1,\xi_1)$};
\end{tikzpicture}

    \caption{Sketch of the main argument used to prove an a priori estimate for solutions of the Cartesian PML problem near the truncation boundary.  $(x_1,\xi_1)$ is a point of phase-space where the problem is elliptic and hence estimates are available without propagation. In contrast, $(x_0,\xi_0)$ is a point where the problem is not elliptic, but from any such point, the backward Hamiltonian flow associated to the real part of the symbol of $P_{\theta}-1$ will eventually reach a corner (i.e. a point of ellipticity)
 }
    \label{fig:Martin_is_better_than_chatGPT}
\end{figure}


\begin{remark}
    We highlight that the only places where the Cartesian structure of the PML is used in the above arguments are 
    (i) to bound $(P_{\theta,\rm free}-1)^{-1}$,
    (ii) to prove the exponential estimate on the free Green's kernel, and (iii) to prove the a priori estimate near the truncation boundary. In particular, in any scaling where one can establish these two estimates one can prove exponential error bounds for the PML problem.
\end{remark}






\paragraph{Why we assume that $F'_i$ is linear near the PML boundary}
The principal symbol of the scaled operator $P_\theta$ has a nonpositive imaginary part; see \eqref{e:symbolSign} below.
For such an operator, the relevant propagation of singularities results in the semiclassical calculus for domains with a boundary have not yet been written down in the literature. Indeed, such propagation of singularities results in the semiclassical calculus have
been proved (i) on manifolds without boundary for operators with symbols whose imaginary parts are single-signed \cite[Theorem E.47]{DyZw:19} and (ii)
on manifolds with boundary for $P$ \cite{Va:04}. 

The assumption in \S\ref{s:def} that $F_i'(x)= x$ for $|x|\geq R_{\lin,i}$  allows us to use a reflection argument (similar to that used for the Cartesian PML analyses in 2-d in \cite[Proof of Theorem 5.5]{BrPa:13}, \cite[Lemma 3.4]{ChXi:13}) to extend the solution past $\partial Q_{\tr}$ and use the propagation results of (i) above, i.e., bypassing the issue of propagation up to the boundary. Once the relevant propagation results are written down in the literature, this assumption can be removed.





\section{Results about the free complex-scaled resolvent}
With  
\beq\label{e:Fourier}
\mathcal{F}_k(u)(\xi) := \int_{\mathbb{R}^d} e^{-i k\langle\xi, x\rangle} u(x) d x,
\eeq
 define for $s\in\mathbb{R}$
$$
\|u\|_{H_k^{s}(\mathbb{R}^d)}^2:=\frac{k^d}{(2\pi )^d}\int_{\Rea^d}(1+|\xi|^{2})^{s}|\mathcal{F}_k(u)(\xi)|^2 d\xi.
$$

\begin{theorem}[Bound on the free scaled resolvent]
\label{t:free}
Given $\delta>0$, $s\in\mathbb{R}$, there exists $C>0$ and $k_0>0$ such that for $k\geq k_0$ and $\theta\in[\delta,\frac{\pi}{2}-\delta]$,
$$
\big\|(P_{\theta,\rm free}-1)^{-1}\big\|_{H^s_k(\Rea^d) \to H^{s+2}_k(\Rea^d)}\leq C k.
$$
\end{theorem}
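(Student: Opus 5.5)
The plan is to exploit the separable (Cartesian) structure of $\Delta_\theta$ and reduce to one-dimensional estimates. Write $P_{\theta,\rm free}-1=\sum_{j=1}^d A_j - 1$ with $A_j:=-k^{-2}\big((1+i\tan\theta F_j''(x_j))^{-1}\partial_{x_j}\big)^2$, each acting only in $x_j$. I will first establish the bound from $L^2=H^0_k$ to $L^2$ (with constant $Ck$), then upgrade to $H^s_k\to H^{s+2}_k$ by ellipticity at high frequency. The regularity step works because $P_{\theta,\rm free}$ is a semiclassical differential operator whose principal symbol $\sum_j \xi_j^2/(1+i\tan\theta F_j'')^2$ is elliptic for $|\xi|$ large, uniformly for $\theta\in[\delta,\pi/2-\delta]$. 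Since $|\arg(1+i\tan\theta F_j'')^{-2}|\le \pi-2\delta$, the symbol takes values in a sector avoiding the negative real axis, so the terms cannot cancel. Standard semiclassical elliptic parametrix estimates (e.g. \cite[Appendix E]{DyZw:19}) then give
$$\|u\|_{H^{s+2}_k}\le C\big(\|(P_{\theta,\rm free}-1)u\|_{H^s_k}+\|u\|_{H^{-N}_k}\big).$$
Combining this with an $L^2$ bound, and with a duality/commutation argument for negative $s$ (the adjoint has the same structure with $\theta\to-\theta$), reduces everything to proving $\|u\|_{L^2}\le Ck\|(P_{\theta,\rm free}-1)u\|_{L^2}$, together with invertibility.

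For the $L^2$ estimate I would use the complex scaling picture: $P_{\theta,\rm free}$ is the restriction of $-k^{-2}\Delta$ to the totally real submanifold $\Gamma=\prod_j\Gamma_j$, where $\Gamma_j=\{x_j+i\tan\theta F_j'(x_j)\}$. Each one-dimensional operator $A_j$ is the complex-scaled 1D Laplacian on $\Gamma_j$. Its spectrum is $\{z^2 e^{-2i\theta}:z\ge0\}$, up to the region near the unscaled part, where $F_j'$ is not yet linear. More robustly, $A_j$ is $m$-sectorial with numerical range in the closed sector $\{\arg w\in[-2\theta,0]\}$; this follows by integrating by parts, using $(1+i\tan\theta F_j'')^{-1}$ and the fact that the natural weighted inner product makes it accretive. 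The sum of commuting operators is then analysed by separation of variables. I would diagonalise each $A_j$ via its generalized eigenfunction expansion, obtained from distorted plane waves of the 1D scaled problem, which is explicitly solvable piecewise. This gives
$$(P_{\theta,\rm free}-1)^{-1}=\int (\lambda_1+\dots+\lambda_d-1)^{-1}\,dE_1(\lambda_1)\cdots dE_d(\lambda_d).$$
Here each $\lambda_j$ lies on a curve starting at $0$ along the real axis and then bending into $\{\Im\lambda<0\}$ at angle $2\theta$.

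The main obstacle is the region where $\sum\lambda_j$ is near $1$ and all the $\lambda_j$ are nearly real, i.e. waves that travel essentially inside the unscaled region in every direction. Since $A_j$ is not normal, its spectral projections are not uniformly bounded, and a naive functional calculus fails. I would therefore instead attempt a direct semiclassical estimate. At points $(x,\xi)$ where the symbol $p=\sum_j \xi_j^2(1+i\tan\theta F_j'')^{-2}-1$ vanishes, either $\Im p<0$ strictly, which is elliptic, or $F_j''(x_j)=0$ for all $j$ with $\xi_j\ne0$. In the second case the bicharacteristic of $\Re p$ moves $x_j$ monotonically in each such direction and exits $\{F_j''=0\}$ in time $O(1)$. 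Propagation of singularities for symbols with $\Im p\le0$ (\cite[Theorem E.47]{DyZw:19}) then gives $\|u\|\le Ck\|(P-1)u\|$ microlocally; the factor $k$ is the standard loss from propagation over bounded time. Two points need care. Rays travelling outward never return, since the scaled region is damping at infinity thanks to linear $F_j'$. In addition, at infinity in $x$ one needs uniform control; this comes from the fact that for $|x_j|\ge R_{\lin,j}$ the coefficients are constant, so explicit Fourier analysis in those variables handles large $|x|$. Invertibility then follows from the a priori estimate plus the same estimate for the adjoint, giving Fredholm index zero with trivial kernel.
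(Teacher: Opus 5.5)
Once you drop the separation-of-variables detour, your strategy is the paper's. You use uniform high-frequency ellipticity to move between $H^s_k$ norms, and the propagation estimate \cite[Theorem E.47]{DyZw:19} for $\Im p\le 0$ along backward $H_{\Re p}$-trajectories that reach the elliptic set. The same argument for the adjoint gives invertibility. The gap is in the step you flag yourself, uniform control at spatial infinity: the mechanism you propose for it does not work.

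A neighbourhood of infinity is covered by the sets $\{|x_j|\ge R_{\lin,j}\}$, but on such a set only the $j$th coefficient need be constant. Take $x_1\to\infty$ with $x_2,\dots,x_d$ in the unscaled range. There the points with $\xi_1=0$ and $\xi_2^2+\dots+\xi_d^2=1$ are characteristic and undamped: these are the rays moving perpendicular to the scaling direction inside the layer. (So it is also not true that ``the scaled region is damping at infinity''.) Take a partial Fourier transform in $x_1$, after a cutoff in $x_1$ whose commutator lives on a set non-compact in $x'=(x_2,\dots,x_d)$. This only yields the family of $(d-1)$-dimensional scaled problems $\sum_{j\ge2}A_j-(1-\cos^2\theta\, e^{-2i\theta}\xi_1^2)$, whose spectral parameter tends to $1$ as $\xi_1\to0$. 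That is the same non-compact, non-elliptic problem one dimension lower, not something explicitly solvable. You would need an induction on dimension with bounds uniform in the spectral parameter, which you have not set up.

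The paper instead makes the propagation estimate itself uniform in position, in two steps. First, Lemma~\ref{l:flow} gives $T,\mu>0$, independent of the base point in $T^*\mathbb{R}^d$ and of $\theta\in[\delta,\pi/2-\delta]$, such that every point with $|p_{\theta,\rm fr}-1|\le\mu$ reaches $|p_{\theta,\rm fr}-1|\ge\mu$ within backward time $T$. Your observation that $x_j$ leaves $\{F_j''=0\}$ in time $O(1)$ does not give this. On the $\mu$-neighbourhood of the characteristic set, $\Im p$ is small but nonzero, and trajectories can enter the transition layer where $\tan\theta F_j''$ may exceed $1$. The paper handles this with three facts:
\begin{itemize}
\item each $p^i_\theta$ is conserved, so some $|\xi_i|\ge c$ along the whole trajectory;
\item the escape function $x_i\xi_i$ satisfies $\partial_t(x_i\xi_i)\ge 2/d-O(\mu)$;
\item $|x_i|\ge R_{\lin,i}$ together with $|\xi_i|\ge c$ forces $\Im p_{\theta,\rm fr}\le -c$.
\end{itemize}
Second, Lemma~\ref{l:propagateFree} localises with the lattice partition $\chi(x-n)$, $n\in\mathbb{Z}^d$, using operators whose kernels are supported within distance $1$ of the diagonal. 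It applies Theorem E.47 on each cell with constants independent of $n$, since the coefficients are bounded in $C^\infty$ and the escape time is uniform. The cells are then summed by almost orthogonality, \eqref{e:orthogonal}.

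A minor point: your sectoriality claim for $A_j$ and the bound $|\arg(1+i\tan\theta F_j'')^{-2}|\le\pi-2\delta$ are not right as stated. $F_j''$ must exceed $1$ somewhere in $(R_{\start,j},R_{\lin,j})$, since $F_j'$ rises from $0$ to $R_{\lin,j}$ there. The uniform high-frequency ellipticity you need still holds, with a constant depending on $\max F_j''$.
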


\bre
\cite[Theorem 3.6]{wang2026wavenumber}
proves an $L^2\to L^2$ bound analogous to that in 
Theorem \ref{t:free} for the case of a piecewise-linear scaling function.
The analogue of Theorem \ref{t:free} for radial PML is \cite[Theorem 3.2]{GLS2}. 
\ere

To prove Theorem \ref{t:free}, and also other results below, we use the following facts about the adjoint of $-\Delta_\theta$. Let 
$$
\widetilde{\Delta}_\theta:= \det (I+ i (\tan \theta)F''(x)) \Delta_\theta = \prod_{j=1}^d (1 + i (\tan \theta)F_j''(x_j))
\Delta_\theta.
$$
Then, by a simple calculation, 
\beqs
(\widetilde{\Delta}_\theta)^*= J \widetilde{\Delta}_\theta J, 
\eeqs
where $J$ is the action of complex conjugate; i.e., $J v = \overline{v}$. Therefore
\beq
\label{e:adjoint}
(\Delta_\theta)^* = 
J \det (I+ i(\tan \theta) F''(x)) \Delta_\theta J
(\det (I - i(\tan \theta)F''(x)))^{-1}.
\eeq

To prove Theorem \ref{t:free} we use 
semiclassical propagation results (i.e., PDE bounds describing how behaviour of Helmholtz solutions are dictated by the behaviour of the rays) from \cite[Appendix E]{DyZw:19} and \cite{Zw:12}. Note that here we use the small parameter $k^{-1}$, but \cite{DyZw:19, Zw:12} denote the small parameter by $h$. 

The semiclassical principal symbol of $P_{\theta,\rm{free}}$ is
$$
p_{\theta,\rm{fr}}(x,\xi):=\sigma(P_{\theta,\rm{fr}})=\sum_{i=1}^d\bigg(\frac{\xi_i}{1+i\tan \theta F''_i(x_i)}\bigg)^2.
$$
Observe that 
\begin{align}\nonumber
&\Re p_{\theta,\rm {fr}}(x,\xi)= \sum_{i=1}^d\frac{(1-(\tan \theta F_i''(x_i))^2)\xi_i^2}{\big(1+(\tan \theta F_i''(x_i))^2\big)^2},
\\
&\Im p_{\theta,\rm{fr}}(x,\xi)=\sum_{i=1}^d\frac{-2\tan \theta F_i''(x_i)\xi_i^2}{\big(1+(\tan \theta F_i''(x_i))^2\big)^2}\leq 0.
\label{e:symbolSign}
\end{align}

Before proceeding to estimate the free resolvent, we need a dynamical fact 
about the rays associated to $P_{\theta, \rm fr}$ (more precisely, the Hamiltonian flow of $\Re p_{\theta,\rm{fr}}$).

\begin{lemma}[Uniform escape of the rays to ellipticity]
\label{l:flow}
    Let $\delta>0$. Then there are $T>0$, $\mu>0$ such that for all $
    \theta\in[\delta,\frac{\pi}{2}-\delta]$, $(x_0,\xi_0)\in T^*\mathbb{R}^d$ with $|p_{\theta,\rm{fr}}(x_0,\xi_0)-1|\leq \mu$,  there is $0\leq t\leq T$ such that 
    $$
    |p_{\theta,\rm {fr}}(\exp(-tH_{\Re p_{\theta,\rm fr}}(x_0,\xi_0))-1|\geq \mu.
    $$
\end{lemma}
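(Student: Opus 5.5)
The plan is to exploit the fact that the Hamiltonian flow of $\Re p_{\theta,\rm fr}$ decouples coordinate by coordinate, and to argue by contraposition. Suppose $|p_{\theta,\rm fr}-1|<\mu$ along the whole backward trajectory $\exp(-tH_{\Re p_{\theta,\rm fr}})(x_0,\xi_0)$ for $0\le t\le T$; we derive a contradiction once $\mu$ is small and $T$ is large, uniformly in $\theta\in[\delta,\pi/2-\delta]$. Write $\tau_i(x_i):=\tan\theta\,F_i''(x_i)$. Since $F_i''$ is smooth, nonnegative and equal to $1$ for $|x_i|\ge R_{\lin,i}$, we have $0\le\tau_i\le \tau_{\max}:=\tan(\pi/2-\delta)\sup F_i''<\infty$.

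The terms $p_i=\xi_i^2/(1+i\tau_i)^2$ satisfy
\[
|p_i|=\frac{\xi_i^2}{1+\tau_i^2},\qquad |\Im p_i|=\frac{2\tau_i}{1+\tau_i^2}|p_i| .
\]
All the $\Im p_i$ have the same sign by \eqref{e:symbolSign}, so $|\Im p_{\theta,\rm fr}|=\sum_i|\Im p_i|\le\mu$ along the trajectory. Also $\sum_i|p_i|\ge|p_{\theta,\rm fr}|\ge 1-\mu$, so for $\mu\le 1/2$ some index $i$ has $|p_i|\ge 1/(2d)$.

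Fix a small $\tau_0\in(0,1/2)$ and then choose $\mu<\tau_0/(2d(1+\tau_{\max}^2))$. Any coordinate $i$ with $|p_i|\ge 1/(4d)$ must then have $\tau_i<\tau_0$: otherwise $|\Im p_i|\ge 2\tau_0|p_i|/(1+\tau_{\max}^2)>\mu$, since $2\tau/(1+\tau^2)$ is bounded below on $[\tau_0,\tau_{\max}]$. By continuity of $F_i''$, and since $\{F_i''>0\}=\{|x_i|>R_{\start,i}\}$ with $F_i''$ bounded below away from $R_{\start,i}$, the condition $\tau_i<\tau_0$ confines $x_i$ to $|x_i|\le R_{\start,i}+\eta$, where $\eta=\eta(\tau_0)\to0$ as $\tau_0\to0$. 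This bound is uniform in $\theta$ because $\tan\theta\ge\tan\delta$.

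Along the flow the $i$-th coordinate obeys
\[
\dot x_i=2a_i(x_i)\xi_i,\qquad \dot\xi_i=-a_i'(x_i)\xi_i^2,\qquad a_i=\frac{1-\tau_i^2}{(1+\tau_i^2)^2},
\]
and $e_i=a_i\xi_i^2$ is conserved. Wherever $\tau_i\le\tau_0$ we have $a_i\ge 1/2$, and $e_i$ is comparable to $|p_i|$ via $|p_i|=e_i(1+\tau_i^2)/(1-\tau_i^2)$. Hence $e_i\gtrsim 1/d$, and $|\dot x_i|=2\sqrt{a_ie_i}\ge c_0>0$ uniformly, with a fixed sign of $\xi_i$ while $\tau_i\le\tau_0$. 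Running the flow backward, $x_i$ therefore moves monotonically in the direction of $-\operatorname{sgn}\xi_i$ at speed at least $c_0$. After time at most $T:=2(R_{\start,i}+\eta+1)/c_0$ it must leave the interval $\{|x_i|\le R_{\start,i}+\eta\}$. This happens either directly (if it starts moving outward) or after crossing $[-R_{\start,i},R_{\start,i}]$, which has bounded length. At the exit time $\tau_i=\tau_0$ while $e_i$ is unchanged, so $|p_i|\ge e_i\ge 1/(4d)$ once $\tau_0$ is small. This contradicts the previous paragraph, which requires $\tau_i<\tau_0$. Taking the maximum of $T$ over the finitely many indices gives constants $T,\mu$ independent of $\theta$ and of $(x_0,\xi_0)$.

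The main obstacle is the bookkeeping that keeps the chosen coordinate in the regime $\tau_i<1$ and the lower bound on $e_i$ valid for the entire time interval. Note that $a_i$ degenerates or changes sign where $\tau_i=1$, so the argument must never let the trajectory reach such points before the contradiction is obtained. I would handle this by ordering the choices carefully: first $\tau_0$, then $\eta$, then $\mu$, then $T$. I would also apply the ``$\tau_i<\tau_0$'' constraint at every time, with the time-$0$ index $i$ fixed, and use conservation of $e_i$ and continuity in $t$ to show that $\tau_i$ first reaches $\tau_0$ before anything else can go wrong.
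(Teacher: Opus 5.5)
Your proof is correct and follows essentially the paper's route. Both arguments use conservation of the one-dimensional energies $e_i=p^i_\theta$, pick by pigeonhole a coordinate carrying a $1/d$ share, use the single sign of the $\Im p_i$ in \eqref{e:symbolSign} to keep that coordinate where $F_i''$ is small, and bound its crossing time. The differences are cosmetic. You use monotonicity of $x_i$ itself (from $\dot x_i=2a_i\xi_i$ with $a_i$ bounded below) where the paper uses the escape function $x_i\xi_i$ with Gronwall. Also, your first-hitting-time bookkeeping is unnecessary: $|p_i|\ge e_i$ holds identically, so conservation of $e_i$ gives $|p_i|\ge 1/(4d)$, and hence $\tau_i<\tau_0$, at every time directly.
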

\begin{proof}
Define 
$$p^i_{\theta}:=\frac{1-(\tan \theta F_i''(x_i))^2}{(1+(\tan \theta F_i''(x_i))^2)^2}\xi_i^2.$$
Then, since $\{p^i_{\theta},\Re p_{\theta,\rm {fr}}\}=0$, $p^i_{\theta}$ is preserved by the Hamiltonian flow for $\Re p_{\theta,\rm {fr}}$.  
   
    Define $(x(t),\xi(t)):=\exp(tH_{\Re p_{\theta,\rm {fr}}})(x_0,\xi_0)$. Let 
    $$
    T_0:=-\sup\big\{ t\leq 0 \,:\, |p_{\theta,\rm {fr}}(x(t),\xi(t))-1|\geq \mu\big\}.
    $$
    Then,
\begin{equation} 
\label{e:flowEquations}
\begin{aligned}
    \partial_t( x_i\xi_i)&= 2 p^i_{\theta} +\frac{2\tan ^2\theta F_i''(x_i)(3- (\tan \theta F_i''(x_i))^2)F_i'''(x_i)}{(1+(\tan\theta F_i''(x_i))^2)^3}\xi_i^2x_i.
    \end{aligned}
    \end{equation}
Observe that on $|p_{\theta,\rm fr}-1|\leq \mu$, 
    $$
    0\leq F_i''(x_i)\xi_i^2\leq C_{\delta}|\Im (p_{\theta,\rm {fr}}-1)|=O_{\delta}(\mu)\qquad \Longrightarrow\qquad \Re p_{\theta,\rm fr}(x_0,\xi_0)= 1+O_{\delta}(\mu).
    $$
    Therefore, there is $i$ such that 
    $$
    p_\theta^i(x_0,\xi_0)=p_\theta^i(x(t),\xi(t))\geq \frac{1}{d}+O_{\delta}(\mu)\qquad \Longrightarrow \qquad |\xi_i(t)|\geq c+O_{\delta}(\sqrt{\mu})
    $$
    Hence, if $t\leq 0$ and $|x_i(t)|\geq R_{\lin, i}$, we have
    $$
    \Im p_{\theta,\rm {fr}}\leq -c<0
    $$
    which implies $t\leq-T_0$.  
    In particular, for $-T_0\leq t\leq 0$, $|x_i|\leq R_{\lin, i}$ and hence using this in~\eqref{e:flowEquations} along with Gronwall's inequality,  for $-T_0\leq t\leq 0$, 
    $$
    x_i(t)\xi_i(t)\leq \Big(-\frac{2}{d}+O_{\delta}(\mu^{1/2})\Big)|t| +x_i(0)\xi_i(0).
    $$
    But then, using again that $c\leq|\xi_i(t)|\leq C$, and $|x_i(t)|\leq R_{\lin, i}$, for $-T_0\leq t\leq 0$,
    $$
    R_{\lin, i}\geq |x_i(-T_0)|\geq C\bigg(\bigg(\frac{2}{d}-O_{\delta}(\mu^{1/2})\bigg)|T_0|-R_{\lin, i}\bigg),
    $$
    which implies that $|T_0|\leq C R_{\lin, i}$, completing the proof.
\end{proof}

\

Recall (from, e.g., \cite[\S 9.3]{Zw:12}) that 
$$
S^m(\mathbb{R}^{2d}):=\Big\{ a\in C^\infty(\mathbb{R}^{2d})\,:\,  
\forall\alpha,\beta \in \mathbb{N}^d,
\sup_{\substack{(x,\xi)\in\mathbb{R}^{2d}\\ k>1
}
}\langle \xi\rangle^{-m+|\beta|}|\partial_x^\alpha \partial_{\xi}^\beta a(x,\xi)|<\infty\Big\},
$$
where $\langle\xi\rangle:= (1+|\xi|^2)^{1/2}$.
Furthermore, for $u\in \mc{S}'$ and $a\in S^m$, we define
$$
\Op_k(a)u(x):=\frac{k^d}{(2\pi )^d}\int_{\Rea^d}\int_{\Rea^d} e^{ik\langle x-y,\xi\rangle}a(x,\xi)u(y)dyd\xi.
$$

\begin{lemma}[Elliptic estimate]
\label{l:elliptic}
    Let $\mu,\delta, N, k_0>0$, $s\in\mathbb{R}$ and $a\in S^0$ with $\supp a\subset \{|p_{\theta,\rm fr}-1|\geq \mu\}$. Then there is $C>0$ such that for all $\theta\in [\delta,\frac{\pi}{2}-\delta]$,   $u\in \mathcal{S}'(\mathbb{R}^d)$ with $(P_{\theta,\rm {free}}-1)u \in H^{s}_k(\Rea^d)$, and $k\geq k_0$,
    $$
    \|\Op_k(a)u\|_{H_k^{s+2}}\leq C\|(P_{\theta,\rm {free}}-1)u\|_{H_k^{s}} +C_Nk^{-N}\|u\|_{H_k^{-N}}.
    $$
\end{lemma}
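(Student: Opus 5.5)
This is the standard semiclassical elliptic parametrix argument, with two points needing care: the symbol is only elliptic on $\supp a$ (not globally), and all constants must be uniform in $\theta\in[\delta,\pi/2-\delta]$.

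\emph{Step 1: a bound on $1/(p_{\theta,\rm fr}-1)$ in symbol classes.} Write $P_{\theta,\rm free}-1=\Op_k(p_{\theta,\rm fr}-1)+k^{-1}\Op_k(r_\theta)$. Here $r_\theta\in S^1$ collects the first-order terms coming from differentiating the coefficient $(1+i\tan\theta F_j''(x_j))^{-1}$. Its seminorms are uniform in $\theta$, since $\tan\theta\le C_\delta$ and all $x$-derivatives of $F_j$ are bounded (recall $F_j''$ is constant for $|x_j|\ge R_{\lin,j}$). Next I claim that on $\{|p_{\theta,\rm fr}-1|\ge\mu\}$ we have
\[
|p_{\theta,\rm fr}(x,\xi)-1|\ge c\langle\xi\rangle^2,
\]
with $c$ uniform in $\theta$. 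For bounded $|\xi|$ this is just the hypothesis. For large $|\xi|$ it follows from ellipticity of $p_{\theta,\rm fr}$. Indeed, each term $\xi_i^2/(1+i\tan\theta F_i''(x_i))^2$ has argument in $(-\pi+2\delta,0]$, because $\arg(1+i\tan\theta F_i'')\in[0,\pi/2-\delta]$. So all terms lie in a closed sector of opening less than $\pi$, and therefore $|p_{\theta,\rm fr}|\ge c_\delta|\xi|^2$.

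\emph{Step 2: parametrix.} Choose $\tilde a\in S^0$ with $\tilde a\equiv1$ on $\supp a$ and $\supp\tilde a\subset\{|p_{\theta,\rm fr}-1|\ge\mu/2\}$. Because the set $\{|p-1|\ge\mu\}$ depends on $\theta$, I construct $\tilde a$ as a smooth function of $|p_{\theta,\rm fr}-1|$ rather than fixing it independently of $\theta$. Step 1 at level $\mu/2$ then shows that $q_0:=\tilde a/(p_{\theta,\rm fr}-1)\in S^{-2}$, with seminorms uniform in $\theta$. Now build the usual iterative parametrix
\[
q\sim\sum_j k^{-j}q_j,\qquad q_j\in S^{-2-j},\quad \supp q_j\subset\supp\tilde a,
\]
so that $\Op_k(q)(P_{\theta,\rm free}-1)=\Op_k(\tilde a)+O(k^{-\infty})_{H_k^{-N}\to H_k^{s+2}}$, using the composition formula \cite[Theorem 9.5]{Zw:12}. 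Composing on the left with $\Op_k(a)$ and using $a\#\tilde a=a+O(k^{-\infty})$ (since $\tilde a\equiv1$ on $\supp a$) gives
\[
\Op_k(a)u=\Op_k(a)\Op_k(q)(P_{\theta,\rm free}-1)u+O(k^{-\infty})u.
\]

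\emph{Step 3: conclude.} $\Op_k(a)\Op_k(q)$ is bounded $H_k^s\to H_k^{s+2}$ uniformly in $k$ and $\theta$, since $aq\in S^{-2}$ and the Calderón–Vaillancourt theorem applies with weights. This yields the stated estimate. It holds for $u\in\mathcal S'$ because $u$ lies in some $H_k^{-N'}$ locally. Since $P_{\theta,\rm free}$ has bounded smooth coefficients, one can instead first prove the estimate for $u\in\mathcal S$ and extend by density and mollification; the $C_Nk^{-N}\|u\|_{H_k^{-N}}$ term is finite once $N$ is large enough.

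The main obstacle is Step 1: establishing the uniform lower bound on $|p_{\theta,\rm fr}-1|$ at high frequency uniformly in $\theta$, which rests on the sector argument. Given that, the rest is routine. One should also check that the $\theta$-dependent cutoff $\tilde a$ still has uniformly bounded seminorms; it does, because $p_{\theta,\rm fr}$ depends smoothly on $\tan\theta$ in a compact range.
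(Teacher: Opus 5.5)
Your proposal is correct and follows the paper's route. The paper's proof consists exactly of the observation that $\{|p_{\theta,\rm fr}-1|\ge\mu\}\subset\{|p_{\theta,\rm fr}-1|\ge c\langle\xi\rangle^2\}$ uniformly in $\theta$, followed by an appeal to the standard semiclassical elliptic parametrix (\cite[Theorem E.33]{DyZw:19}, \cite[Theorem 9.5]{Zw:12}), which you have written out in full.

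There is one minor slip, which does not affect the argument. The bound $\arg(1+i\tan\theta F_i'')\le\pi/2-\delta$ fails in general: $F_i''$ exceeds $1$ somewhere, since its integral over $[R_{\start,i},R_{\lin,i}]$ is $R_{\lin,i}$. However, boundedness of $F_i''$ still confines all terms to a closed sector of opening $\pi-2\delta'$ for some $\delta'=\delta'(\delta,\sup F_i'')>0$. So your ellipticity bound, and everything after it, goes through unchanged.
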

\begin{proof}
    Observe that for all $\mu>0$, there is $c>0$ such that 
    $$
    \big\{ |p_{\theta,\rm {fr}-1}-1|\geq \mu\big\}\subset \big\{ |p_{\theta,\rm{fr}}-1|\geq c\langle \xi\rangle^2\big\}.
    $$
    Hence, the lemma follows from the elliptic estimate for semiclassical pseudodifferential operators (see e.g. the proof of~\cite[Theorem E.33]{DyZw:19} together with~\cite[Theorem 9.5]{Zw:12}).
\end{proof}

\begin{lemma}[Propagation estimate]
\label{l:propagateFree}
    Let $k_0>0$, there is $\mu>0$ such that for all $\delta, N>0$ and $a\in S^0$ with $\supp a\subset \{|p_{\theta,\rm fr}-1|\leq \mu\}$, there is $C>0$ such that for all $\theta\in [\delta,\frac{\pi}{2}-\delta]$,   $u\in \mathcal{S}'(\mathbb{R}^d)$ with $(P_{\theta,\rm {free}}-1)u \in H^{-N}_k(\Rea^d)$, and $k\geq k_0$,
    $$
    \|\Op_k(a)u\|_{H_k^N(\Rea^d)}\leq Ck\|(P_{\theta,\rm {free}}-1)u\|_{H_k^{-N}(\Rea^d)} +C_Nk^{-N}\|u\|_{H_k^{-N}(\Rea^d)}.
    $$
\end{lemma}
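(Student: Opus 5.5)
Since the principal symbol $p_{\theta,\rm fr}-1$ has $\Im p_{\theta,\rm fr}\le 0$ by \eqref{e:symbolSign}, I would prove the estimate with the semiclassical propagation of singularities for operators whose symbol has single-signed imaginary part, \cite[Theorem E.47]{DyZw:19}. The sign is the one that propagates estimates \emph{forward} along $H_{\Re p_{\theta,\rm fr}}$, i.e.\ we control $u$ at a point from its behaviour on the backward trajectory. This is exactly the direction supplied by Lemma~\ref{l:flow}.

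\textbf{Choice of parameters.} Take $\mu$ to be half the constant $\mu$ from Lemma~\ref{l:flow}, and let $T$ be the corresponding time. Then for every $(x_0,\xi_0)\in\supp a\subset\{|p_{\theta,\rm fr}-1|\le\mu\}$ there is $t\in[0,T]$ with $\exp(-tH_{\Re p_{\theta,\rm fr}})(x_0,\xi_0)\in\{|p_{\theta,\rm fr}-1|\ge 2\mu\}$. On $\{|p_{\theta,\rm fr}-1|\le 1/2\}$ we have $|\xi|\le C$ uniformly in $\theta\in[\delta,\pi/2-\delta]$, because the $F_j''$ terms only damp. So $\supp a$ sits in a set of bounded frequency, though it is not compact in $x$.

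\textbf{Elliptic control set.} Choose $b\in S^0$ with $b\equiv1$ on $\{|p_{\theta,\rm fr}-1|\ge 2\mu\}\cap\{|\xi|\le C'\}$ and $\supp b\subset\{|p_{\theta,\rm fr}-1|\ge\mu\}$. Lemma~\ref{l:elliptic} then bounds $\|\Op_k(b)u\|_{H_k^{N}}$ by $C\|(P_{\theta,\rm free}-1)u\|_{H_k^{N-2}}+O(k^{-N})\|u\|_{H_k^{-N}}$.

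\textbf{Propagation step.} Apply \cite[Theorem E.47]{DyZw:19} with the pair $(a,b)$. Every point of $\supp a$ reaches the elliptic set $\{b\equiv 1\}$ under the backward flow within time $T$. Propagation gives
$$\|\Op_k(a)u\|_{L^2}\le C\|\Op_k(b)u\|_{L^2}+Ck\|(P_{\theta,\rm free}-1)u\|_{L^2}+C_Nk^{-N}\|u\|_{H_k^{-N}}.$$
Because $\supp a$ has bounded $\xi$, $\Op_k(a)=\Op_k(\tilde a)\Op_k(a)+O(k^{-\infty})$ for some $\tilde a$ with compact $\xi$-support. This lets me replace $L^2$ norms by $H_k^{N}$ on the left and $H_k^{-N}$ on the right (microlocalizing the forcing term too), which yields the stated estimate. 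The $k$ factor is the standard loss $T\cdot k$ from integrating along the flow.

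\textbf{Main obstacle: uniformity.} The cited theorem is stated on compact sets with fixed symbols. Here the phase space region is unbounded in $x$, and $\theta$ varies. I expect this to be the hardest step, and I would handle it as follows.
\begin{itemize}
\item \emph{Unbounded $x$.} All coefficients $F_j''$ are constant for $|x_j|\ge R_{\lin,j}$, so the symbol lies in $S^2$ with seminorms bounded uniformly for $\theta\in[\delta,\pi/2-\delta]$. The escape time $T$ from Lemma~\ref{l:flow} is also uniform.
\item \emph{Positive-commutator argument.} With these two facts, the escape-function construction underlying \cite[Theorem E.47]{DyZw:19} goes through on $\mathbb{R}^d$ with symbol-class bounds rather than compact support. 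The escape function is built by integrating $b$ along the flow for time $T$, as in the proof there.
\item \emph{Remainders.} All constants depend only on finitely many seminorms, which gives uniform constants in $\theta$ and $k\ge k_0$.
\end{itemize}
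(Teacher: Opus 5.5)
Your argument is correct in outline. It handles the one genuinely delicate point, that $\supp a$ is non-compact in $x$, differently from the paper.

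\textbf{What you do.} You re-run the positive-commutator proof of \cite[Theorem E.47]{DyZw:19} globally on $\mathbb{R}^{2d}$, with a single escape function whose $S^0$ seminorms are uniform in $x$ and $\theta$.

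\textbf{What the paper does.} It uses \cite[Theorem E.47]{DyZw:19} only as stated, for compactly supported operators.
It first replaces $\Op_k(a)$ by a quantization $\widetilde{\Op}_k(a)$ whose kernel is supported in $\{|x-y|\le 1\}$, which is an $O(k^{-\infty})$ change.
It then splits $a=\sum_{n\in\mathbb{Z}^d}a\chi_n$ with a unit-scale partition of unity.
To each $a_n$ it applies the propagation and elliptic estimates, with control and forcing terms localized within distance $C$ of $n$; this uses locality of $P_{\theta,\rm free}$ and Lemma~\ref{l:flow}.
Finally it sums using bounded overlap: $\|\sum_n\widetilde{\Op}_k(a_n)u\|_{L^2}^2\le C\sum_n\|\widetilde{\Op}_k(a_n)u\|_{L^2}^2$ and $\sum_n\|\phi_n f\|_{H_k^{-N}}^2\le C\|f\|_{H_k^{-N}}^2$.

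\textbf{What each route buys.} The paper only needs the local constants to be uniform in $n$. That follows from exactly the two facts you list: uniform symbol bounds and a uniform escape time. The cost is an elementary almost-orthogonality summation. Your route avoids the summation, but you must actually build the global escape function, and this is where your sketch is thinnest.

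\textbf{Where your escape function needs care.}
First, Lemma~\ref{l:flow} gives a hitting time $t\in[0,T]$ that varies with the point. So integrating along the flow for one fixed time $T$ does not, by itself, put the wrong-sign part of $H_{\Re p_{\theta,\rm fr}}(g^2)$ inside the elliptic set of $b$.
Second, beyond that elliptic set the flow of $\Re p_{\theta,\rm fr}$ need not stay at bounded frequency. For $\theta>\pi/4$, a trajectory with $p^j_\theta>0$ can reach the point where $\tan\theta\,F_j''(x_j)=1$ in finite time, with $|\xi_j|\to\infty$. The flow therefore has to be cut off before you can show $g\in S^0$ uniformly.
Both issues are fixable. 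One way is to glue local escape functions over a uniformly locally finite cover, which is the paper's partition of unity in another form.

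\textbf{A smaller point.} For the control term, use the microlocalized elliptic estimate with $b$ compactly supported in $\xi$. Lemma~\ref{l:elliptic} with the $H_k^{N-2}$ norm of the forcing, as you wrote it, is not enough, because only $\|(P_{\theta,\rm free}-1)u\|_{H_k^{-N}}$ is allowed on the right-hand side.
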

\begin{proof}
It will be convenient in this proof to work with pseudodifferential operators whose kernels are supported in a unit  neighborhood of the diagonal. To do this, we fix $\psi\in C_c^\infty([-1,1])$ with $\supp (1-\psi)\cap [-1/2,1/2]=\emptyset$, and write for $b\in S^\infty$, and $u\in \mc{S}'$, 
    $$
    \widetilde{\Op}_k(b)u(x):=\frac{k^d}{(2\pi )^d}\int_{\Rea^d}\int_{\Rea^d}e^{ik\langle x-y,\xi\rangle} b(x,\xi)\psi(|x-y|)u(y)dyd\xi.
    $$
    By integration by parts in $\xi$ (see \cite[Theorem 9.6]{Zw:12}),
    $$
    \|(\Op_k(a)-\widetilde{\Op}_k(a))u\|_{H_k^N}\leq C_Nk^{-N}\|u\|_{H_k^{-N}}.
    $$
        Therefore, it is sufficient to estimate $\widetilde{\Op}_k(a)u$. For this, let $\chi \in C_c^\infty(B(0,4\sqrt{d});[0,1])$ such that $\sum_{n\in\mathbb{Z}^d}\chi(x-n)\equiv 1$ and define
$$
a_n:=a(x,\xi)\chi_n(x),\qquad \chi_n(x):=\chi(x-n).
$$

 Then, by~\cite[Theorem E.47]{DyZw:19} together with locality of $P_{\theta,\rm{free}}$ and Lemma~\ref{l:flow} there are $C>0$, $b_n,b_n'\in C_c^\infty(\mathbb{R}^{2d})$, $\phi_n\in C_c^\infty(\mathbb{R}^d)$ with $\supp b_n\subset \{ |p_{\theta,\rm{fr}}-1|\geq c\langle \xi\rangle^2,\, |x-n|\leq C\}$, $\supp b_n'\subset \{ |x-n|\leq C\}$, $\supp \phi_n\subset\{ |x-n|\leq C\}$ such that 
   \begin{align*}
&\|\widetilde{\Op}_k(a_n)u\|_{L^2}\\
&\quad\leq Ck\|\widetilde{\Op}_k(b_n')(P_{\theta,\rm {free}}-1)u\|_{H_k^{-1}}+C\|\widetilde{\Op}_k(b_n)u\|_{L^2}+C_Nk^{-N}\|\phi_n u\|_{H_k^{-N}}\\
    &\quad\leq Ck\|\phi_n (P_{\theta,\rm {free}}-1)u\|_{H_k^{-N}}+C\|\widetilde{\Op}_k(b_n)u\|_{L^2}+C_Nk^{-N}\|\phi_n u\|_{H_k^{-N}}.
   \end{align*}
   Next, using the elliptic estimate (see, e.g., the proof of~\cite[Theorem E.33]{DyZw:19} together with~\cite[Theorem 9.5]{Zw:12}) and locality of $P_{\theta,\rm {free}}$ to control $\widetilde{\Op}_k(b_n)u$, we obtain 
   \begin{equation} 
   \label{e:localEstimate}
   \begin{aligned}
    \|\widetilde{\Op}_k(a_n)u\|_{L^2}
    &\leq Ck\|\phi_n (P_{\theta,\rm {free}}-1)u\|_{H_k^{-N}}+C_Nk^{-N}\|\phi_n u\|_{H_k^{-N}}.
   \end{aligned}
   \end{equation}
   Now, we estimate
   \begin{equation}\label{e:hightoLow}
       \|\widetilde{\Op}_k(a)u\|_{H_k^N}^2\leq C\|\widetilde{\Op}_k(a)u\|_{L^2}^2+C_Nk^{-N}\|u\|_{H_k^{-N}}^2.
\end{equation}
Then, using the support properties of $\phi_n$,  and~\eqref{e:localEstimate}, we have
\begin{align}\nonumber
       \|\sum_{n\in \mathbb{Z}^d}\widetilde{\Op}_k(a_n)u\|_{L^2}^2
       &=\sum_{n,n'}\langle \widetilde{\Op}_k(a_n)u,\widetilde{\Op}_k(a_{n'})u\rangle_{L^2}\\ \nonumber
       &=\sum_{|n-n'|\leq C}\langle \widetilde{\Op}_k(a_n)u,\widetilde{\Op}_k(a_{n'})u\rangle_{L^2}\\ \nonumber
       &\leq C\sum_n\|\widetilde{\Op}_k(a_n)u\|_{L^2}^2\\ \nonumber
       &\leq C\sum_n k^{2}\|\phi_n (P_{\theta,\rm {free}}-1)u\|^2_{H_k^{-N}}+C_Nk^{-N}\|\phi_n u\|_{H_k^{-N}}^2\\
       &\leq Ck^{2}\|(P_{\theta,\rm {free}}-1)u\|^2_{H_k^{-N}}+C_Nk^{-N}\|u\|_{H_k^{-N}}^2.\label{e:orthogonal}
   \end{align}
Combining~\eqref{e:hightoLow} with~\eqref{e:orthogonal} completes the proof.
\end{proof}

\

\begin{proof}[Proof of Theorem \ref{t:free}]
    Let $\mu$ be as in Lemma~\ref{l:propagateFree} and $a\in S^0$ with $\supp a\subset \{ |p_{\theta,\rm{fr}}-1|\leq \mu\}$ and $\supp (1-a)\subset \{ |p_{\theta,\rm{fr}}-1|\geq \frac{\mu}{2}\}.$ Then, Lemmas~\ref{l:propagateFree} and~\ref{l:elliptic} imply, respectively, that 
    $$
    \|\Op_k(a)u\|_{H_k^N}\leq Ck\|(P_{\theta,\rm {free}}-1)u\|_{H_k^{-N}}+C_Nk^{-N}\|u\|_{H_k^{-N}}
    $$
    and 
    $$
    \|\Op_k(1-a)u\|_{H_k^{s+2}}\leq C\|(P_{\theta,\rm{free}}-1)u\|_{H_k^{s}}+C_Nk^{-N}\|u\|_{H_k^{-N}}.
    $$
    Hence, for $k$ large enough,
    $$
    \|u\|_{H_k^{s+2}}\leq Ck\|(P_{\theta,\rm{free}}-1)u\|_{H_k^{s}}.
    $$
    An identical argument but applied to $P_{\theta,\rm{free}}^*$ 
(using the relation \eqref{e:adjoint})
    implies
      $$
    \|u\|_{H_k^{-s}}\leq Ck\|(P^*_{\theta,\rm{free}}-1)u\|_{H_k^{-s-2}},
    $$
    and hence that $(P_{\theta,\rm{free}}-1)$ is invertible with the required estimate.
\end{proof}

\begin{lemma}[Decay estimates for the free complex-scaled resolvent]\label{l:decay}
Given $\delta,\e,C_0,k_0>0$
there exists $C,\widetilde{c}>0$ such that the following is true.
Let $\chi_1,\chi_2\in C^\infty(\Rea^d;[0,1])$ with $\supp \chi_1 \subset Q_{\tr}\setminus Q_{\lin}$
 $\supp \chi_2 \subset Q_{\start}$, and
 $$
 \|\partial^\alpha \chi_i\|_{L^\infty}\leq C_0,\qquad |\alpha|\leq 2, \quad i=1,2.
 $$
Then for $R_{\start}+\e < R_{\lin}$, $k>k_0$, $\theta\in [\delta,\pi/2-\delta]$,
$$
\big\| \chi_1 (P_{ \theta,\rm free}-1)^{-1}\chi_2 \big\|_{L^2(\Rea^d) \to H_k^2(\Rea^d)} \leq C 
\exp\big(- \widetilde{c} k\tan\theta\,
\dist(\supp \chi_1 ,\supp \chi_2)\big).
$$
\end{lemma}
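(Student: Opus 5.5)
The plan is to use the explicit free Green's function, analytically continued to the Cartesian deformation $\Gamma$, rather than a propagation argument. Let $G_k(z)$ denote the outgoing free Green's function of $-k^{-2}\Delta-1$ on $\mathbb{R}^d$. It is $k^2$ times a multiple of $(k|z|)^{1-d/2}H^{(1)}_{d/2-1}(k|z|)$, with $|z|:=(\sum_j z_j^2)^{1/2}$. For $x,y\in\mathbb{R}^d$ set
\[
K_\theta(x,y):=G_k\big(\gamma_\theta(x)-\gamma_\theta(y)\big)\prod_{j=1}^d\big(1+i(\tan\theta)F_j''(y_j)\big),
\]
using the principal branch of the square root; the Jacobian factor accounts for the change of variables $y\mapsto\gamma_\theta(y)$.

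\textbf{Step 1: $K_\theta$ is the kernel of $(P_{\theta,\rm free}-1)^{-1}\chi_2$.} Since $F$ is separable, $-k^{-2}\Delta_\theta$ is the pull-back by $\gamma_\theta$ of the holomorphic Laplacian. Hence the function $x\mapsto K_\theta(x,y)$ satisfies $(P_{\theta,\rm free}-1)K_\theta(\cdot,y)=\delta_y$, provided the complex distance $r:=|\gamma_\theta(x)-\gamma_\theta(y)|$ stays off the branch cut. I will check this below for $y\in Q_\start$, and by a standard deformation argument it holds for general $y$.

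For $f\in L^2$, put $v:=\int K_\theta(\cdot,y)\chi_2(y)f(y)\,dy$. Because $F_j'(x_j)=x_j$ for $|x_j|\geq R_{\lin,j}$, the function $v$ decays exponentially as $|x|\to\infty$, so $v\in H_k^2$. It satisfies $(P_{\theta,\rm free}-1)v=\chi_2 f$. The invertibility in Theorem~\ref{t:free} then gives $v=(P_{\theta,\rm free}-1)^{-1}\chi_2 f$.

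\textbf{Step 2: lower bound on $\Im r$.} Take $y\in Q_\start$, so $\gamma_\theta(y)=y$, and $x\in\supp\chi_1$. Write $z_j=a_j+ib_j$, where $a_j=x_j-y_j$ and $b_j=(\tan\theta)F_j'(x_j)$. Since $F_j'$ vanishes on $[-R_{\start,j},R_{\start,j}]$ and $F_j'$ has the sign of $x_j$, we get $b_j\neq 0$ only when $|x_j|>R_{\start,j}\geq|y_j|$. In that case $a_j$ has the same sign as $b_j$, so $a_jb_j\geq 0$. Consequently $\Im r^2=2\sum_j a_jb_j\geq0$, so $r$ lies in the closed first quadrant, away from the branch cut, and
\[
\Im r=\frac{\sum_j a_jb_j}{\Re r}.
\]
Because $x\in Q_\tr\setminus Q_\lin$, some index $j$ has $|x_j|\geq R_{\lin,j}$. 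For that $j$, $|a_j|\geq |x_j|-R_{\start,j}$ and $|b_j|=\tan\theta\,|x_j|\geq \tan\theta\,(|x_j|-R_{\start,j})$, while $\Re r\leq |r|\leq C(1+\tan\theta)$ with $C$ depending on $R_\tr$. The delicate point is to turn this into
\[
\Im r\geq c\tan\theta\,\dist(\supp\chi_1,\supp\chi_2),
\]
uniformly for $\theta\in[\delta,\pi/2-\delta]$. One has to sum over all $j$ and compare with $|x-y|$; the uniformity in $\theta$ is what is used here. I expect this elementary but fiddly geometric inequality to be the main obstacle. One also needs $|kr|\geq c k\e$, which holds since the two supports are at distance at least $\e$.

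\textbf{Step 3: from the kernel bound to the operator bound.} With $|kr|$ large and $r$ in the upper half-plane, the Hankel asymptotics give
\[
|\partial_x^\alpha K_\theta(x,y)|\leq Ck^{N_\alpha}e^{-k\Im r}
\]
for $|\alpha|\leq2$, uniformly in $\theta$. The polynomial factors $k^{N_\alpha}$ are absorbed by slightly decreasing the exponent to $\widetilde c<c$, using $\dist\geq\e$. Since both supports are bounded, Schur's test gives the $L^2\to L^2$ bound for $\chi_1(P_{\theta,\rm free}-1)^{-1}\chi_2$ and for its $k^{-1}\partial_x$-derivatives up to order two. The bounds $\|\partial^\alpha\chi_1\|_{L^\infty}\leq C_0$ control the derivatives falling on $\chi_1$. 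This yields the stated $L^2\to H^2_k$ estimate.
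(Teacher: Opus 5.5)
You follow the paper's route: the explicit kernel $G_k(\gamma_\theta(x)-\gamma_\theta(y))$ times the Jacobian, Hankel asymptotics, and Schur's test. Differentiating the kernel directly instead of invoking elliptic regularity is an immaterial difference. The gap is in Step 2. The quantitative bound $\Im r\ge c\tan\theta\,\dist(\supp\chi_1,\supp\chi_2)$ is the only non-routine part of the lemma, and you leave it unproved. Moreover, the estimate you set up for it cannot deliver the right constants.

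Bounding $\Re r\le |r|\le C(1+\tan\theta)$ with $C$ depending on $R_\tr$, and combining this with $\langle a,b\rangle\ge a_jb_j$, only gives $\Im r\ge c(R_\tr)\tan\theta$. You can turn that into a multiple of $\tan\theta\,\dist$ only by dividing by the diameter of $Q_\tr$. Then $\widetilde c$ degenerates as the layer widens, the distance in the exponent becomes cosmetic, and the exponential decay in $\min_i(R_{\tr,i}-R_{\start,i})$ claimed in Theorem~\ref{t:main} is lost. What you need is an upper bound on $\Re r$ proportional to $|x-y|$, not to the size of the box.

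The missing ingredient is Lemma~\ref{l:sneaky}. Take $a=x-y$ and $b=(\tan\theta)(\nabla F(x)-\nabla F(y))$, with $\langle a,b\rangle\ge 0$ by convexity of $F$. Then $\Re r\le |a|$. Indeed, $(\Re r)^2=\tfrac12\big(|r^2|+|a|^2-|b|^2\big)$, and by Cauchy--Schwarz, $|r^2|^2=(|a|^2-|b|^2)^2+4\langle a,b\rangle^2\le (|a|^2+|b|^2)^2$. Combined with your identity $\Im r=\langle a,b\rangle/\Re r$, this gives $\Im r\ge \langle a,b\rangle/|x-y|$.

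The paper then shows $\langle a,b\rangle\ge c\tan\theta\,|x-y|^2$. It picks an index $i$ with $|x_i|>R_{\lin,i}$ and $|x_i-y_i|\ge c|x-y|$. It then uses that $D^2F=\mathrm{diag}(F_j''(x_j))$ is nonnegative, together with $F_i'(x_i)-F_i'(y_i)=x_i$. This yields $\Im r\ge c'\tan\theta\,|x-y|\ge c'\tan\theta\,\dist(\supp\chi_1,\supp\chi_2)$, uniformly in $\theta$ and in $R_\tr$.

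Relatedly, your claim $|kr|\ge ck\epsilon$ does not follow from the real distance being at least $\epsilon$: $r$ vanishes when $|a|=|b|$ and $a\perp b$. It does follow from $|r|\ge \Im r$ once the lower bound above is in hand.
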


To prove Lemma \ref{l:decay}, we use the following lemma.

\begin{lemma}\label{l:sneaky}
Suppose that $u, v\in \Rea^d$ with $\langle u,v\rangle \geq 0$ and $u\neq 0$. 
Then (with the principal branch of the square root),
$$
\Im\Big( \Big(\sum_{j=1}^d (u_j+ i v_j)^2 \Big)^{1/2}\Big)
 \geq \frac{\langle u,v\rangle}{|u|}.
 $$
\end{lemma}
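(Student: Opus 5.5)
The plan is to reduce the statement to an explicit computation with the complex number
$$
z:=\sum_{j=1}^d (u_j+iv_j)^2 = |u|^2-|v|^2+2i\langle u,v\rangle ,
$$
and its principal square root $w=a+ib$. For the principal branch we have $a=\Re w\geq 0$. Moreover $b=\Im w\geq 0$ whenever $\Im z\geq 0$, which holds here since $\Im z=2\langle u,v\rangle\ge 0$; the case $z\in(-\infty,0]$ gives $w\in i[0,\infty)$ and is consistent with this. Squaring $w^2=z$ gives the two real identities
$$
a^2-b^2=|u|^2-|v|^2,\qquad ab=\langle u,v\rangle .
$$

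The key step is the bound $a\leq |u|$. I will obtain it from the standard formula for the real part of a principal square root:
$$
a^2=\tfrac12\big(\Re z+|z|\big)=\tfrac12\big(|u|^2-|v|^2+|z|\big).
$$
By Cauchy--Schwarz, $\langle u,v\rangle^2\le |u|^2|v|^2$, so
$$
|z|=\big((|u|^2-|v|^2)^2+4\langle u,v\rangle^2\big)^{1/2}\le \big((|u|^2-|v|^2)^2+4|u|^2|v|^2\big)^{1/2}=|u|^2+|v|^2 .
$$
Substituting this into the formula for $a^2$ gives $a^2\le |u|^2$, i.e. $a\le|u|$.

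It remains to conclude, splitting into two cases. If $a>0$, then $b=\langle u,v\rangle/a\geq \langle u,v\rangle/|u|$, using $\langle u,v\rangle\ge 0$, $0<a\le|u|$, and $u\neq0$. If $a=0$, then $ab=0$ forces $\langle u,v\rangle=0$, and the claim reduces to $b\ge 0$, which was noted above.

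There is no real obstacle. The only points needing care are the sign conventions of the principal branch, namely $a\ge0$ always and $b\ge0$ when $\Im z\ge0$, including the degenerate case where $z$ is a nonpositive real number.
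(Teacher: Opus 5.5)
Your proposal is correct and follows essentially the same route as the paper: from $ab=\langle u,v\rangle$, the identity $a^2=\tfrac12(\Re z+|z|)$ and Cauchy--Schwarz you get $a\le|u|$, which gives the bound on $b$. Your explicit treatment of the case $a=0$ (where $\langle u,v\rangle=0$ and only $b\ge 0$ is needed) spells out a detail the paper leaves implicit in its phrase ``it is enough to show that $p\leq|u|$''.
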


\bpf
To keep the notation concise, we denote $\sum_{j=1}^d (u_j+ i v_j)^2$ by $\langle u+ iv , u+i v\rangle$ (i.e., the inner product on $\Rea^d$ is extended to $\Com^d$ bilinearly). 
Let $p,q$ be such that 
\beq\label{e:rain1}
p+i q:=\langle u+ iv ,u+iv\rangle^{1/2}
\quad 
\text{ so that }
\quad (p+iq)^2=
|u|^2-|v|^2+2i\langle u,v\rangle.
\eeq
Now $q\geq 0$ since $\langle u,v\rangle \geq 0$, and $p\geq 0$ from the principal branch of the square root. 

We need to prove that $q \geq \langle u, v\rangle/|u|.$
Taking the imaginary part of the second equation in  \eqref{e:rain1} we see that  $pq = \langle u,v\rangle$, so it is enough to show that $p\leq |u|$. 
Taking the real part of the second equation in \eqref{e:rain1} and combining it with the equality $p^2+q^2 = |\langle u+ iv , u+i v\rangle|$, we obtain that 
\[
p^2
=
\frac{
\left|\langle u+iv,u+iv\rangle\right|
+|u|^2-|v|^2
}{2}.
\]
By the Cauchy--Schwarz inequality,
\begin{align*}
\left|\langle u+iv,u+iv\rangle\right|^2=
(|u|^2-|v|^2)^2+4\langle u,v\rangle^2&\leq
(|u|^2-|v|^2)^2+4|u|^2|v|^2\\
&=
(|u|^2+|v|^2)^2.
\end{align*}
Hence 
\[
p^2
\leq
\frac{|u|^2+|v|^2+|u|^2-|v|^2}{2}
=|u|^2
\]
and the proof is complete.
\epf

\

\bpf[Proof of Lemma \ref{l:decay}]
By the explicit expression for the fundamental solution,
$$
\big((P_{\theta,\rm free}-1)^{-1} f\big)(x) = \int_{\Rea^d} G_k(\gamma_\theta(x) -\gamma_\theta(y)) f(y)
\prod_{j=1}^d (1 + i (\tan \theta)F_j''(y_j))
\, dy
$$
where
\begin{equation}\label{e:green}
 G_k(z):=\frac{i k^2}{4}
 \left(\frac{k}{2\pi\rho(z)}\right)^\nu
 H_\nu^{(1)}\left(k r(z) \right),\quad r(z):=\Big( \sum_{j=1}^d z_j^2 \Big)^{1/2}, \quad\tand \nu:=(d-2)/2.
\end{equation}
By the Schur test (see, e.g., \cite[\S I.1]{Grafakos2008}) or the Riesz-Thorin interpolation theorem (see, e.g., \cite[Theorem 1.34]{Grafakos2008}),
\begin{align*}
&\| \chi_1(P_{\theta,\rm free}-1)^{-1} \chi_2\|_{L^2\to L^2}\\
&\hspace{1cm}\leq \Big({\rm ess sup}_{x\in \Rea^d} \int_{\Rea^d}\big|\chi_1(x) G_k(\gamma_\theta(x) -\gamma_\theta(y))\chi_2(y)\big| \, dy\Big)^{1/2} 
\\
&\hspace{3cm}\times\Big({\rm ess sup}_{y\in \Rea^d} \int_{\Rea^d}\big|\chi_1(x) G_k(\gamma_\theta(x) -\gamma_\theta(y))\chi_2(y)\big| \, dx\Big)^{1/2}.
\end{align*}
Now 
\beq\label{e:asymptotics1}
H_\nu^{(1)}(t) = \sqrt{\frac{2}{\pi t}}\, \exp \Big(i\Big(t - \frac{\pi \nu}{2} - \frac{\pi}{4}\Big)\Big)\bigg(1+ \mathcal{O}\bigg(\frac{1}{t}\bigg)\bigg)
\quad\tas |t|\to\infty
\eeq
for $-\pi+\delta\leq \arg t \leq \pi -\delta$; see, e.g., \cite[\S10.17(i)]{Di:26}). Therefore 
given $c>0$ there exists $C>0$ such that if $\Im r(z)\geq c$ then 
$$
|G_k(z)| \leq C k^{\frac{d+1}{2}}|r(z)|^{-\frac{d-1}{2}}\exp( -k \Im r(z) ).
$$
With $\gamma_\theta$ defined by \eqref{e:gamma}, by direct calculation
$$
\gamma_\theta(x) -\gamma_\theta(y) = (x-y) + i(\tan\theta) \big( \nabla F(x) - \nabla F(y)\big).
$$
By convexity of $F$, $\langle \nabla F(x) - \nabla F(y), x-y\rangle\geq 0.$ Therefore, by Lemma \ref{l:sneaky}, with $u=x-y$ and $v=(\tan \theta)(\nabla F(x) - \nabla F(y))$,
\beq\label{e:train1}
\Im \Big( 
\sum_{j=1}^d (\gamma_\theta(x) -\gamma_\theta(y))_j^2\Big)^{1/2} \geq  (\tan\theta)
\Big\langle \frac{x-y}{|x-y|}, \nabla F(x)-\nabla F(y)\Big\rangle.
\eeq
Since $\supp \chi_1\subset Q_{\tr}\setminus Q_{\lin}$ and $\supp \chi_2\subset Q_{\start}$, for $x\in \supp \chi_1$ and $y\in \supp \chi_2$, there is $i$ such that $|x_i-y_i|\geq c|x-y|$ and $|x_i|>R_{\lin,i}$ and $|y_i|\leq R_{\start, i}$ and hence, using that $D_x^2F=\operatorname{diag}(F_i''(x_i))$ and $F_i''(x_i)\geq 0$, we obtain
\begin{align*}
\Big\langle \frac{x-y}{|x-y|}, \nabla F(x)-\nabla F(y)\Big\rangle&=|x-y|\int_0^1\big\langle D_x^2F(sx+(1-s)y) \tfrac{x-y}{|x-y|},\tfrac{x-y}{|x-y|}\big\rangle ds\\
&\geq c|x-y|\int_0^1 F_i''(sx_i+(1-s)y_i) ds\\
& = c|x-y|\frac{F'_i(x_i)- F'_i(y_i)}{x_i-y_i}=c|x-y| \frac{x_i}{x_i-y_i}\geq c'|x-y|.
\end{align*}
The result then follows after using elliptic regularity for $P_{\theta,\rm{free}}$.
\epf

\section{Results about the complex-scaled resolvent}

\begin{theorem}[Agreement away from scaling]\label{t:agree}
If 
$\chi \in C^\infty_c(Q_{\start})$ with $\supp(1-\chi)\cap Q_{\bb}=\emptyset$, then 
$$
\chi (P_\theta-1)^{-1}\chi= \chi R_P \chi.
$$
\end{theorem}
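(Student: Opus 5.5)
Let $g\in\cH$ and set $u:=R_P\chi g$, the outgoing solution of $(P-1)u=\chi g$. The plan is to build from $u$ a solution of the deformed problem $(P_\theta-1)w=\chi g$ that agrees with $u$ on $Q_{\start}$. Then the identity follows once we know $(P_\theta-1)w=\chi g$ has a unique solution, so that $w=(P_\theta-1)^{-1}\chi g$ and $\chi(P_\theta-1)^{-1}\chi g=\chi u=\chi R_P\chi g$. I will follow the standard complex-scaling argument of \cite[\S4.5]{DyZw:19}, adapted to the Cartesian deformation \eqref{e:gamma}.

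\textbf{Step 1 (analytic continuation).} Outside $Q_{\bb}$ the function $u$ solves $(-k^{-2}\Delta-1)u=0$ on $\Rea^d\setminus\overline{Q_{\start}}$. By the outgoing property it can be written, for $|x|$ large, through the free outgoing resolvent: $u=R_0 f$, where $f:=[-k^{-2}\Delta,\phi]u+\phi\chi g$ is compactly supported in $Q_{\start}$ and $\phi$ is a cutoff equal to $0$ near $Q_{\bb}$ and $1$ outside $Q_{\start}$. Hence, outside $Q_{\start}$,
\[
\phi u(x)=\int G_k(x-y)f(y)\,dy,\qquad y\in\supp f\subset Q_{\start}.
\]
I then define the continuation
\[
w(x):=\int G_k(\gamma_\theta(x)-y)f(y)\,dy \quad\text{for } x\notin Q_{\start},
\]
and $w:=u$ on $Q_{\start}$, noting that $\gamma_\theta(x)=x$ there since $F\equiv0$ on the relevant region. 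For $y\in Q_{\start}$ we have $\nabla F(y)=0$, so $\gamma_\theta(x)-y=\gamma_\theta(x)-\gamma_\theta(y)$. Lemma~\ref{l:sneaky} and the convexity computation in the proof of Lemma~\ref{l:decay} give $\Im r(\gamma_\theta(x)-y)\ge0$. Thus $r$ stays in the domain where $H^{(1)}_\nu$ is analytic, and the two definitions of $w$ match across $\partial Q_{\start}$.

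\textbf{Step 2 (equation and decay).} The map $z\mapsto G_k(z-y)$ is holomorphic in $z$ and satisfies the complexified Helmholtz equation, so the chain rule through $\gamma_\theta$ gives $(-k^{-2}\Delta_\theta-1)w=0$ outside $Q_{\start}$. Together with Step 1 this yields $(P_\theta-1)w=\chi g$. For $|x_i|\ge R_{\lin,i}$, the estimate \eqref{e:train1} with the bound $\ge c'|x-y|$ from the proof of Lemma~\ref{l:decay} gives $\Im r\ge c\tan\theta\,|x-y|$ in some direction. Combined with \eqref{e:asymptotics1}, this shows that $w$ and its derivatives decay exponentially, so $w\in\cD$ of the deformed operator.

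\textbf{Step 3 (uniqueness).} It remains to show $(P_\theta-1)$ is injective on its domain. Suppose $(P_\theta-1)v=0$ with $v$ in the domain. Off the black box, $\phi v$ satisfies $(P_{\theta,\rm free}-1)\phi v=[P_{\theta,\rm free},\phi]v$. By Theorem~\ref{t:free}, $\phi v=(P_{\theta,\rm free}-1)^{-1}[P_{\theta,\rm free},\phi]v$. Undoing the deformation on $Q_{\start}$ via the free kernel, run backwards as in Step 1, gives an outgoing solution $\tilde v$ of $(P-1)\tilde v=0$ with $\tilde v=v$ on $Q_{\start}$. Uniqueness of outgoing solutions (from \eqref{e:ucp} and Rellich) gives $\tilde v=0$, so $v=0$ on $Q_{\start}$. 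Then $\phi v=0$ as well, since it is determined by $v$ on $Q_{\start}$.

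Invertibility itself, not just injectivity, is needed to write $(P_\theta-1)^{-1}$. I would obtain it from Fredholmness of $P_\theta-1$: it is a compact perturbation of a free scaled operator that is invertible by Theorem~\ref{t:free}, so injectivity gives surjectivity. With that, $w=(P_\theta-1)^{-1}\chi g$ and the theorem follows.

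\textbf{Main obstacle.} I expect the hardest step to be Step 3, together with the rigorous justification in Step 1 that the pulled-back kernel is well defined when $\gamma_\theta(x)-y$ ranges over complex vectors, i.e.\ the correct branch of $r(z)$ with $\Im r\ge0$. This is exactly where Lemma~\ref{l:sneaky} enters.
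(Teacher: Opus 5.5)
Your argument is correct in substance, but it runs in the opposite direction from the paper's. The paper starts from $u=(P_\theta-1)^{-1}\chi g$; existence of the inverse is taken for granted in this proof and addressed separately in Lemma~\ref{l:inherit0}. It writes $\phi u=(P_{\theta,\rm free}-1)^{-1}(\phi\chi g+[-k^{-2}\Delta,\phi]u)$ and uses $\tilde\chi_2(P_{\theta,\rm free}-1)^{-1}\tilde\chi_1=\tilde\chi_2R_{P_{\rm free}}\tilde\chi_1$ to glue $(1-\phi)u$ to $\tilde\phi R_{P_{\rm free}}(\cdots)$. This gives an outgoing solution of $(P-1)\tilde u=\chi g$, and the paper concludes by uniqueness of outgoing solutions. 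That is exactly your Step~3, run with right-hand side $\chi g$ instead of $0$.

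So once invertibility is known, your Step~3 alone proves the theorem. Conversely, if invertibility is presupposed (as in the paper's proof), your Steps~1--2 alone suffice, since $w$ lies in the domain and solves $(P_\theta-1)w=\chi g$.

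What your forward construction buys is an explicit formula for the scaled solution. It also gives an actual justification of the continuation identity that the paper only asserts: the holomorphic chain rule through the diagonal map $\gamma_\theta$, the branch choice, and decay via Lemma~\ref{l:sneaky}. For unbounded $x$, the bound $\Im r\ge c\tan\theta\,|x-y|$ needs a small variant of the argument in Lemma~\ref{l:decay}, which is written only for $x\in Q_{\tr}$. What the paper's backward direction buys is that one never has to check that a continued solution of the black-box problem lies in the domain of $P_\theta$; only outgoing uniqueness is used.

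One sentence in your proposal is wrong as stated: $P_\theta-1$ is not a compact perturbation of $P_{\theta,\rm free}-1$. The two act on different spaces, with different domains inside the black box. Even for variable coefficients they differ by a second-order operator, which is not compact $\cD\to\cH$.

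Fredholmness instead comes from a parametrix that glues a localised $(P-i)^{-1}$ (compact by the black-box assumption) to $(P_{\theta,\rm free}-1)^{-1}$ near infinity. Index zero then needs a separate deformation argument, as in \cite[\S4.5]{DyZw:19}. Alternatively, prove lower bounds for both $P_\theta-1$ and its adjoint, as the paper does in Lemma~\ref{l:inherit0}; injectivity alone does not give closed range. Since the paper's proof of this theorem presupposes the inverse, this does not affect the identity itself, but your surjectivity step needs one of these replacements.
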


\bpf
This is proved for the case of scattering by a Dirichlet obstacle in 
\cite[Proposition IX.2]{Ki:09}, \cite[Proof of Theorem 5.4]{BrPa:13}. For completeness we give the (short) proof here in the black-box setting. 

Given $g\in \cH$, let $u\in \cD$ solve 
$
(P_{\theta}-1)u=\chi g.
$
We first show that away from the black-box $u$ can be written as a solution to the free complex-scaled problem.
Let $\phi\in C^\infty(\mathbb{R}^d;[0,1])$ be such that $\supp \phi\cap \supp (P_{\theta}-(-k^{-2}\Delta_{\theta}))=\emptyset$ and $\supp(\nabla\phi) \subset Q_{\start}$.
Then
\begin{align*}
(P_{\theta,\rm{free}}-1)\phi u =
(P_{\theta}-1)\phi u 
 =\phi \chi g + [P_\theta,\phi]u&=\phi \chi g + [-k^{-2}\Delta_{\theta},\phi]u\\
 &=\phi \chi g + [-k^{-2}\Delta,\phi]u.
\end{align*}
so that
\beq\label{e:idea1}
\phi u=(P_{\theta,\rm{free}}-1)^{-1}\big(\phi \chi g+[-k^{-2}\Delta,\phi]u\big).
\eeq
Let $\tilde{\phi}\in C^\infty(\mathbb{R}^d;[0,1])$ with $\supp (1-\tilde{\phi})\cap \supp \phi=\emptyset$ and $\supp \tilde{\phi}\subset \mathbb{R}^d\setminus Q_{\bb}$ and $\tilde{\chi}_j\in C_c^\infty(Q_{\start}))$, $j=1,2$ with $\supp (1-\tilde{\chi}_j)\cap  \supp\chi=\supp(1-\tilde{\chi}_1)\cap \supp \nabla \phi=\supp(1-\widetilde{\chi}_2)\cap \supp \nabla \widetilde{\phi}=\supp \widetilde{\chi}_2\cap \supp \phi=\emptyset$ 

Observe that 
$$
(P_{\theta,\rm{free}}-1)^{-1}\tilde{\chi}_1:=(-k^{-2}\Delta_\theta-1)^{-1}\tilde{\chi}_1 = \big(R_{P_{\rm free}}\tilde{\chi}_1\big) |_{\Gamma};
$$
in the last equality, we mean the analytic continuation of $R_{P_{\rm free}}\tilde{\chi}_1$ to $\mathbb{C}^d$ (away from $\supp \tilde{\chi}_1$), restricted to $\Gamma$. 
Hence, since $\Gamma=\mathbb{R}^d$ on $\supp \tilde{\chi}_2$, we obtain
$$
\tilde{\chi}_2(P_{\theta,\rm{free}}-1)^{-1}\tilde{\chi}_1= \tilde{\chi}_2(-k^{-2}\Delta-1)^{-1}\tilde{\chi}_1|_{\Gamma}= \tilde{\chi}_2R_{P_{\rm free}}\tilde{\chi}_1.
$$
In particular, by \eqref{e:idea1},
\begin{align}\nonumber
0=\tilde{\chi}_2\phi u
&=\tilde{\chi}_2(P_{\theta,\rm{free}}-1)^{-1}\tilde{\chi}_1\big(\phi \chi g+[-k^{-2}\Delta,\phi]u\big)
\\
&=\tilde{\chi}_2R_{P_{\rm free}}\tilde{\chi}_1\big(\phi \chi g+[-k^{-2}\Delta,\phi]u\big).
\label{e:goodSupport}
\end{align}
Now define
$$
\tilde{u}:= (1-\phi)u+\tilde{\phi}R_{P_{\rm free}}\big(\phi \chi g+[-k^{-2}\Delta,\phi]u\big)
$$
and observe that 
\begin{align*}
(P-1)\tilde{u}&= (P-1)(1-\phi)u+(P-1)\tilde{\phi}R_{P_{\rm free}}\big(\phi \chi g+[-k^{-2}\Delta,\phi]u\big)\\
&= (P-1)(1-\phi)u+(P_{\rm free}-1)\tilde{\phi}R_{P_{\rm free}}\big(\phi \chi g+[-k^{-2}\Delta,\phi]u\big)\\
&= \chi g+[-k^{-2}\Delta,\tilde{\phi}]\widetilde{\chi}_2R_{P_{\rm free}}\widetilde{\chi_1}\big(\phi \chi g+[-k^{-2}\Delta,\phi]u\big)\\
&=\chi g,
\end{align*}
where we used~\eqref{e:goodSupport} in the last line. 
Since $\tilde{u}$ is outgoing, 
the result now follows from uniqueness of outgoing solutions to $(P-1)u=\chi g$ (see~\eqref{e:ucp}). 
\epf

\begin{lemma}[Bounding the complex-scaled resolvent in terms of the resolvent]
\label{l:inherit0}
Given $\delta>0$ there exists $k_0>0$ such that the following is true. 
Given $\chi \in C^\infty_c(Q_{\start})$
with $\supp(1-\chi)\cap 
Q_{\bb}=\emptyset$, there exists $C>0$ such that for all $\theta\in [\delta,\frac\pi2-\delta]$, and $k\geq k_0$, 
$(P_\theta-1)^{-1}: \cH\to \cD$ exists and satisfies 
$$
\|(P_\theta-1)^{-1}\|_{\cH\to \cD} \leq C \| \chi R_P \chi\|_{\cH\to \cH}.
$$
\end{lemma}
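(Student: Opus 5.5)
We will construct an approximate inverse (parametrix) for $P_\theta-1$ by gluing the outgoing resolvent near the black box to the free complex-scaled resolvent away from it. The gluing error will be a compact perturbation that we can invert by an explicit Neumann-type identity.

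Choose cutoffs $\chi_0,\chi_1\in C_c^\infty(Q_{\start})$ with $\chi_0\equiv 1$ near $Q_{\bb}$ and $\chi_1\equiv 1$ on $\supp\chi_0$. Take $\chi$ from the statement so that, after shrinking if needed (or noting that the norm of $\chi R_P\chi$ controls that of $\chi' R_P\chi'$ for any cutoff $\chi'\prec\chi$ via the usual cutoff resolvent identity), $\chi\equiv1$ on $\supp\chi_1$. Define
$$
\mathcal{R}:=\chi_1 R_P\chi_0+(1-\chi_0)(P_{\theta,\rm free}-1)^{-1}(1-\chi_1')
$$
in the standard Dyatlov--Zworski black-box gluing form, with $\chi_1'$ intermediate. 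More precisely, we follow \cite[Theorem 4.4 proof]{DyZw:19}:
$$
\mathcal{R}=\chi_1 R_P\chi_0+(1-\chi_0)(P_{\theta,\rm free}-1)^{-1}(1-\chi_0).
$$
Since $P_\theta=P$ on $\supp\chi_1\subset Q_{\start}$ and $P_\theta=P_{\theta,\rm free}$ on $\supp(1-\chi_0)$, we have
$$
(P_\theta-1)\mathcal{R}=I+[P,\chi_1]R_P\chi_0-[P_{\theta,\rm free},\chi_0](P_{\theta,\rm free}-1)^{-1}(1-\chi_0)=:I+K_1+K_2 .
$$
Then $K_1^2=0$ by support disjointness ($\chi_0[P,\chi_1]=0$). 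Also $K_2$ has its right factor supported where $\chi_0\neq 1$, and its left factor $[P,\chi_0]$ in $\supp\nabla\chi_0$. The troublesome term is $K_2$, which is not small. The standard trick is to replace the construction by one in which the free piece enters through the already-established agreement: by Theorem~\ref{t:agree}, in the unscaled region the scaled and unscaled resolvents agree. So we instead aim to show injectivity and an a priori bound directly.

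The a priori estimate is as follows. Let $u\in\cD$ with $(P_\theta-1)u=f$. Write $u=\chi_0 u+(1-\chi_0)u$. The outer piece satisfies
$$
(1-\chi_0)u=(P_{\theta,\rm free}-1)^{-1}\big((1-\chi_0)f-[P_{\theta,\rm free},\chi_0]u\big),
$$
so by Theorem~\ref{t:free} it is bounded by $Ck(\|f\|+\|\chi_1 u\|_{H^1_k})$, and we must control the inner part. For that, note that $\chi_1u$ is the restriction to $Q_{\start}$ of an outgoing-type solution. Mimicking the proof of Theorem~\ref{t:agree}, $\tilde u$ (built from $u$ and $R_{P_{\rm free}}$) solves $(P-1)\tilde u=\tilde f$ with $\tilde f$ supported in $\supp\chi$ and depending boundedly on $f$; uniqueness then gives $\chi_1 u=\chi_1 R_P\chi\tilde f$. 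The main obstacle is that $\tilde f$ involves $[-k^{-2}\Delta,\phi]u$, which reintroduces $u$. To close the argument, rather than estimating $u$ directly we apply the identity to the outer part of $f$ first: set $v:=(1-\chi_0)(P_{\theta,\rm free}-1)^{-1}(1-\chi_0)f$, so that $u-v$ solves a problem with right-hand side $\chi f+[P_{\theta,\rm free},\chi_0](P_{\theta,\rm free}-1)^{-1}(1-\chi_0)f$, which is supported in $\supp\chi$. Applying Theorem~\ref{t:agree} to this compactly supported data (its proof identifies the scaled solution near the black box with $R_P$ applied to the data, and outside with free-resolvent extensions), we obtain
$$
\|u\|_{\cD}\le C k^{2}\|\chi R_P\chi\|\,\|f\|.
$$
Here the powers of $k$ come from Theorem~\ref{t:free} and are absorbable into the constant only up to polynomial factors; if the exact statement needs no power of $k$, we use $\|\chi R_P\chi\|\gtrsim k^{-1}$ together with the sharper local bounds. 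Existence then follows from the analogous estimate for the adjoint via \eqref{e:adjoint} (or Fredholmness of $P_\theta-1$, index zero by deformation, plus this injectivity).

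We expect the hard step to be the identification of $u-v$ near the black box with $R_P$ applied to compactly supported data, i.e.\ showing that uniqueness for $(P_\theta-1)$ holds at all. If needed, we will prove injectivity by taking $f=0$ and running the proof of Theorem~\ref{t:agree} to get $\chi u=0$, then $u=0$ via Theorem~\ref{t:free}.
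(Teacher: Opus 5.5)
Your architecture is the right one and is essentially the paper's route, which defers to the radial argument of \cite[Lemma 3.3]{GLS2} with Theorem~\ref{t:agree} as input. You split off the part of $f$ away from the black box with the free scaled resolvent, identify the remainder near the black box with $R_P$ via the argument of Theorem~\ref{t:agree}, and get existence from the adjoint.

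The gap is that the proposal never establishes the lemma's actual content, namely a comparison constant independent of $k$. You end with $\|u\|_{\cD}\le Ck^2\|\chi R_P\chi\|\,\|f\|$, and the proposed repair cannot work. A lower bound on $\|\chi R_P\chi\|$ can absorb powers of $k$ only when they enter \emph{additively}, as in $C(k+\|\chi R_P\chi\|)$. It can never remove a multiplicative factor $k^2$. Moreover, the lower bound you quote is in the wrong normalisation. Here $P=-k^{-2}\Delta$ outside the black box, and $\|\chi R_P\chi\|_{\cH\to\cH}\ge ck$, not $\gtrsim k^{-1}$. To see this, test with $u=\varphi(x)e^{ikx_1}$, where $\varphi\in C_c^\infty$ is supported in a ball outside $\overline{Q_{\bb}}$ on which $\chi\equiv1$. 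Then $u$ is compactly supported, hence outgoing, so $u=R_P(P-1)u$, while $\|(P-1)u\|=O(k^{-1})\|u\|$.

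What is missing is the bookkeeping that makes the $k$-losses additive. The spurious powers come from your bound $\|(1-\chi_0)u\|\le Ck(\|f\|+\|\chi_1u\|_{H^1_k})$. This discards the fact that $[-k^{-2}\Delta,\chi_0]$ maps $H^1_k\to L^2$ with norm $O(k^{-1})$. So a commutator followed by $(P_{\theta,\rm free}-1)^{-1}$ (norm $\le Ck$ by Theorem~\ref{t:free}) costs $O(1)$; only the direct action of the free scaled resolvent on $f$ costs $Ck$.

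Concretely, take $\chi_0\equiv1$ near $Q_{\bb}$, $\chi'\equiv1$ on $\supp\chi_0$, and $\chi\equiv1$ near $\supp\chi'$. Set $w:=(P_{\theta,\rm free}-1)^{-1}(1-\chi')f$. This choice also fixes your right-hand side, which as written is $(1-(1-\chi_0)^2)f+\dots$ rather than $\chi f+\dots$. Then:
\begin{itemize}
\item $z:=u-(1-\chi_0)w$ solves $(P_\theta-1)z=h:=\chi'f+[-k^{-2}\Delta,\chi_0]w$, with $\|h\|\le C\|f\|$ and $h=\chi h$.
\item The argument of Theorem~\ref{t:agree} gives $\chi z=\chi R_P\chi h$.
\item A $k$-uniform Caccioppoli estimate on an annulus outside $Q_{\bb}$ gives $\|z\|_{H^1_k(\supp\nabla\chi_0)}\le C(\|\chi z\|+\|h\|)$.
\item Writing $(1-\chi_0)z$ via the free scaled resolvent then yields $\|u\|_{\cH}\le C(k+\|\chi R_P\chi\|)\|f\|$.
\end{itemize}
Only at this point does $\|\chi R_P\chi\|\ge ck$ close the estimate. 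Finally, $\|u\|_{\cD}\le 2\|u\|_{\cH}+\|f\|$ since $P_\theta u=f+u$.
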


\bpf
Given Theorem \ref{t:agree}, the proof of Lemma \ref{l:inherit0} under the assumption that $(P_\theta-1)^{-1}: \cH\to \cD$ exists 
is  identical to the proof of the analogue of this result for radial PML in \cite[Proof of Lemma 3.3]{GLS2} -- the only difference is that the appropriate cut-off functions in \cite[Proof of Lemma 3.3]{GLS2}, whose support properties are defined via balls, must now have support properties defined via suitable hyperrectangles.

To demonstrate that $(P_\theta-1)^{-1}: \cH\to \cD$ exists, we repeat the argument above for the  adjoint  
$(P^*_\theta-1):  \cH\to \cD^*$, and then use the fact that an operator is invertible if and only if both the operator and its adjoint are bounded below.
\epf

\section{Results about the truncated complex-scaled resolvent}

\begin{lemma}[Estimate near PML boundary using propagation and reflection]\label{l:propagate}
Given $\epsilon,\delta, C_0>0$
there exists $k_0>0$ such that given 
$N>0$ there exists $C>0$ such that the following is true. 
Let 
$R_{\start,i}+\e<R_{i}<R_{\tr,i}$ and let $U:=Q_{\tr}\setminus \prod_{i=1}^d(-R_{i},R_{i})$ and $\psi\in C_c^\infty(\overline{U})$ with 
$$
\|\partial^\alpha \psi\|_{L^\infty}\leq C_0,\qquad |\alpha|\leq 2.
$$ 
Then for 
all $\theta\in [\delta, \pi/2-\delta]$, all $k\geq k_0$
and all 
$u \in \cD(Q_{\tr})$ with $u=0$ on $\partial Q_{\tr}$ 
\begin{equation}\label{e:propagate}
\|\psi u\|_{\cD(Q_{\tr})}\leq C \Big(k \| (P_\theta^D-1)u\|_{\cH(Q_{\tr})} + k^{-N} \| u\|_{\cH(Q_{\tr})}\Big),
\end{equation}
and for all $v\in \cH(Q_\tr)$ 
\begin{equation}\label{e:propagate2}
\|\psi v\|_{\cH(Q_{\tr})}\leq C \Big(k \| (P_\theta^D-1)^*v\|_{\cD^*(Q_{\tr})} + k^{-N} \| v\|_{\cH(Q_{\tr})}\Big).
\end{equation}
\end{lemma}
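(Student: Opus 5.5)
The plan is to remove the boundary $\partial Q_{\tr}$ by odd reflection and then apply the free-space estimates of Section~3 (elliptic estimate plus propagation) to the extended function.

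\textbf{Step 1: reduction to a free problem.} Since $\supp\psi\subset\overline U$ and $R_i>R_{\start,i}+\e$, the function $\psi u$ vanishes near the black box. Hence $\|\psi u\|_{\cD(Q_{\tr})}$ is comparable to $\|\psi u\|_{H^2_k}$, and $P_\theta^D$ acts on it as $-k^{-2}\Delta_\theta$. We may shrink the support by a partition of unity. Near any face $\{x_i=\pm R_{\tr,i}\}$ we have $|x_i|\ge R_{\lin,i}$, so $F_i''\equiv1$ there. Therefore the $i$-th factor $(1+i\tan\theta)^{-1}\partial_{x_i}$ has constant coefficient in $x_i$ near that face.

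\textbf{Step 2: reflection.} Extend $u$ oddly across each face $x_i=\pm R_{\tr,i}$. This means $\tilde u(\dots,x_i,\dots)=-u(\dots,2R_{\tr,i}-x_i,\dots)$ near $x_i=R_{\tr,i}$, and similarly at the other face. Near corners we perform the reflections successively in each coordinate.

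We then define a reflected operator by replacing each $F_j$ by its even reflection about $\pm R_{\tr,j}$. Because $F_j''=1$ on a neighbourhood of the face, the reflected $F_j''$ is smooth. The Dirichlet condition $u\in H^1_0$, together with $\Delta_\theta u\in L^2$, gives that $\tilde u\in H^2_{\rm loc}$ and that
\[
(-k^{-2}\widetilde\Delta_\theta-1)\tilde u
\]
equals the odd reflection of $(P^D_\theta-1)u$. The reason is that the $x_i$-part is $(1+i\tan\theta)^{-2}\partial_{x_i}^2$, which commutes with reflection in $x_i$, while the other parts have coefficients independent of $x_i$.

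Next we cut off smoothly in a slightly larger neighbourhood $\tilde U$ of $\overline U$. Using an extended $F$ that is convex only away from the reflected region is not needed. What matters is that the new symbol $\tilde p$ still has $\Im\tilde p\le0$, that it equals $p_{\theta,\rm fr}$ near $\supp\psi$, and that it is elliptic wherever $F_i''=1$ and $|\xi_i|$ is not small.

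\textbf{Step 3: propagation and ellipticity.} On the extended problem we apply \cite[Theorem E.47]{DyZw:19} and the elliptic estimate, exactly as in Lemmas~\ref{l:elliptic} and~\ref{l:propagateFree}. For this we need a version of Lemma~\ref{l:flow} for $\tilde p$, starting from points over $\supp\psi$.

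This dynamical step is the main obstacle. A point $x\in U$ has some coordinate with $|x_i|\ge R_i>R_{\start,i}+\e$. On the characteristic set, however, $\xi_i$ may be small, so the ray moves parallel to the face and is undamped. In that case $p^j_\theta\ge 1/d+O(\mu)$ for some other $j$. The conserved quantities $p^j$ and the monotonicity of $x_j\xi_j$ from~\eqref{e:flowEquations} then show that the backward flow reaches $|x_j|\ge R_{\lin,j}$ in uniformly bounded time, and in the reflected picture this includes crossing the reflected faces. At that point $\Im\tilde p\le -c$. During this time $x_i$ stays in the region where $F_i''>0$, since $\xi_i$ is small and $x_i$ is nearly constant. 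Consequently the trajectory stays within $\tilde U$, where the extended operator agrees with the reflected one, and the cut-off errors are controlled by the $k^{-N}\|u\|$ term.

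Summing the resulting microlocal estimates gives
\[
\|\psi u\|_{H^2_k}\le Ck\|(P^D_\theta-1)u\|+Ck^{-N}\|u\|,
\]
which is~\eqref{e:propagate}. The constants are uniform in $\theta$ and $R_i$ because all the bounds depend only on $\delta$, $\e$ and $C_0$.

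\textbf{Step 4: the adjoint estimate.} For~\eqref{e:propagate2} we use~\eqref{e:adjoint}: up to conjugation by $J$ and multiplication by smooth nonvanishing factors, $(P^D_\theta)^*$ is again a Dirichlet Cartesian-scaled operator, now with symbol of opposite sign of $\Im$. Propagation therefore runs forward in time, and the flow argument is the same. Here $v$ is only in $L^2$, so we reflect $v$ oddly and invoke the same propagation theorem at regularity level $L^2$ with the forcing measured in $H^{-2}_k$. This matches the $\cD^*(Q_{\tr})$ norm locally near the boundary, by duality with $H^2\cap H^1_0$.
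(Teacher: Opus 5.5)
Your proposal is correct and follows essentially the same route as the paper. Both arguments use odd reflection across the faces, where $F_i''\equiv 1$ makes the normal part constant-coefficient (the paper simply cites \cite[Lemma 3.6]{GGGLS2} for this step), then \cite[Theorems E.33, E.47]{DyZw:19}, propagating backward for $u$ and forward with $H^{-2}_k$ forcing for the adjoint. The only difference is the dynamical step: the paper argues directly on $\{p_{\theta,\rm fr}=1\}$, where every scaled coordinate has $\xi_j=0$ and is frozen while an unscaled coordinate with $\xi_i\neq 0$ drifts linearly into $\{F_i''>0\}$, whereas you re-run the Lemma~\ref{l:flow} argument on a $\mu$-neighbourhood; both work, but note that you should reflect $F_j''$ rather than $F_j$ evenly, since $F_j'(R_{\tr,j})\neq 0$ gives the even reflection of $F_j$ a kink.
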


\bpf
To prove~\eqref{e:propagate}, let $f:= (P_\theta^D-1)u\in \cH(Q_{\tr})$. 
By~\cite[Lemma 3.6]{GGGLS2}, 
since $R_{\tr,i}> R_{\lin,i}$,
we may extend $u$ to a solution $\tilde{u}$ to 
$$
(P_\theta-1)\tilde{u}=\tilde{f}
$$
in a neighborhood, $V$, of $\partial Q_{\tr}$ with $U\Subset V$, where $\tilde{f}\in L^2(V)$ is an appropriate reflection of $f$ across $\partial Q_{\tr}$ and hence $\|\tilde{f}\|_{L^2(V)}\leq C\|f\|_{\cH}$.

We now apply the propagation and elliptic estimates of \cite[Theorem E.47, Theorem E.33]{DyZw:19}. 
To do this, we first show that for any $(x,\xi)\in T^*U$ with $p_{\theta,\rm fr}(x,\xi)=1$ the backward Hamiltonian trajectory for $\Re p_{\theta,\rm fr}$ encounters $p_{\theta,\rm fr}\neq 1$ before exiting $T^*V$. Once we have proved this, the result follows directly by applying to $\tilde{u}$~\cite[Theorem E.47]{DyZw:19} near points where $p_{\theta,\rm fr}-1=0$ and~\cite[Theorem E.33]{DyZw:19} otherwise.

To prove the dynamical statement, observe that for any $j=1,\dots,d$ and $(x,\xi)\in T^*U$, with $x_j> R_{\start, j}$ and  $p_{\theta,\rm fr}(x,\xi)=1$, we have
$
\xi_j=0 \Rightarrow \partial_{\xi_j}\Re p_{\theta, \rm fr}=0
$
and hence $x_j$ is constant along the Hamiltonian trajectory of $\Re p_{\theta,\rm fr}$ until the trajectory meets $p_{\theta,\rm fr}\neq 1$. This implies that such trajectories remain inside $V$ until they either encounter $p_{\theta,\rm fr}\neq 1$ or exit $V$. To see that the former happens notice that $\sum_{i\neq j}\xi_i^2>c>0$ and hence there is $i$ with $|\xi_i|>0$, $F_i''(x_i)=0$. This implies that $x_i$ increases (or decreases) and $\xi_i$ is constant along the Hamiltonian flow for $\Re p_{\theta,\rm fr}$ until $F_i''(x_i)\neq 0 $. Since $F_i''(x_i)\neq 0$, $\xi_i\neq 0$ implies $p_{\theta,\rm fr}(x_i,\xi_i)\neq 1$ and $F_i''(x_i)>0$ for $|x_i|>R_{\start,i}$ this implies that the backward trajectory for $\Re p_{\theta,\rm free}$ meets $p_{\theta,\rm fr}\neq 1$ before exiting $V$. 

For~\eqref{e:propagate2}, the procedure is the same except that we set $f:=(P_\theta^D-1)^*v\in\cD^*(Q_{\tr})$. Then, in a neighborhood, $V$, of $\partial Q_{\tr}$, 
$$
(P_\theta-1)^*\tilde{v}=\tilde{f}
$$
where $\tilde{f}$ in $H^{-2}(V)$ and $\|\tilde{f}\|_{H_k^{-2}(V)}\leq C\|f\|_{\cD^*}$ (see the proof of~\cite[Lemma 3.2]{GGGLS2}). The estimate then follows in the same way except that we follow trajectories of $\Re p_{\theta,\rm fr}$ forward. 
\epf

\begin{lemma}[Bounding the truncated complex-scaled resolvent in terms of the resolvent]
\label{l:inherit}
Suppose that there 
exists 
 $\chi \in C^\infty_c(Q_{\start})$
with $\supp(1-\chi)\cap 
Q_{\bb}=\emptyset$,  $\mathcal{J}\subset (0,\infty)$, and  $M>0$ such that for $k_0>0$ 
and there is $C_0>0$ such that for 
all $k\in\mathcal{J}$, $k \geq k_0$
$$
\| \chi R_P \chi\|_{\cH\to \cD}\leq C_0k^M.
$$
Given $\delta, \e>0$ there exists $C>0$ and $k_1\geq k_0>0$ such that for 
$R_{\start,i}+\e<R_{\tr,i}$,
$\theta\in [\delta, \pi/2-\delta]$, and all $k\in\mathcal{J}$ with $k \geq k_1$,
$$
\|(P_\theta^D-1)^{-1}\|_{\cH(Q_{\tr})\to \cD(Q_{\tr})}\leq C
\| \chi R_P \chi\|_{\cH\to \cD}.
$$
\end{lemma}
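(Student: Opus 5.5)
We prove an a priori estimate $\|u\|_{\cD(Q_{\tr})}\leq C\|\chi R_P\chi\|\,\|(P^D_\theta-1)u\|_{\cH(Q_{\tr})}$ for all $u\in\cD(Q_{\tr})$, together with the analogous lower bound for the adjoint. Invertibility then follows because an operator is invertible when both it and its adjoint are bounded below. Throughout we use $\|\chi R_P\chi\|\leq C_0k^M$, which makes negligible $k^{-N}$ error terms absorbable.

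\textbf{Step 1 (cut-off decomposition).} Choose $R_i$ with $R_{\start,i}+\e/2<R_i<R_{\tr,i}$, and let $U$ be as in Lemma \ref{l:propagate}. Take $\psi\in C^\infty$ with $\psi\equiv1$ near $\partial Q_{\tr}$, $\supp\psi\subset\overline U$, and $1-\psi$ supported in $Q_{\tr}$ away from $\partial Q_{\tr}$. Let $\tilde\psi$ be a slightly larger cut-off of the same type, with $\tilde\psi\equiv1$ on $\supp\nabla\psi$; all derivatives are controlled by constants depending only on $\e$. Write $u=\psi u+(1-\psi)u$.
\begin{itemize}
\item The term $(1-\psi)u$ vanishes near $\partial Q_{\tr}$, so it lies in $\cD$, and $(P_\theta-1)(1-\psi)u=(1-\psi)f-[P_\theta,\psi]u$, where $f:=(P^D_\theta-1)u$.
\item By Lemma \ref{l:inherit0}, $\|(1-\psi)u\|_{\cD}\leq C\|\chi R_P\chi\|\big(\|f\|+\|[P_\theta,\psi]u\|_{\cH}\big)$.
\end{itemize}
The commutator is a first-order operator with $k^{-1}$ weights, supported in $\supp\nabla\psi\subset U$. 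Hence $\|[P_\theta,\psi]u\|\leq Ck^{-1}\|\tilde\psi u\|_{\cD}$, or at least $\leq C\|\tilde\psi u\|_{H^1_k}\leq C\|\tilde\psi u\|_{\cD}$.

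\textbf{Step 2 (near the boundary).} Apply Lemma \ref{l:propagate} (estimate \eqref{e:propagate}) to $\psi$ and to $\tilde\psi$:
$$\|\tilde\psi u\|_{\cD(Q_{\tr})}\leq C\big(k\|f\|+k^{-N}\|u\|\big).$$
Combining with Step 1 gives
$$\|u\|_{\cD}\leq C\|\chi R_P\chi\|\,k\|f\|+Ck^{-N}\|\chi R_P\chi\|\,\|u\|+Ck\|f\|.$$
Choosing $N>M$ and $k\geq k_1$ large, we absorb the $\|u\|$ term.

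This yields $\|u\|_{\cD}\leq Ck\|\chi R_P\chi\|\,\|f\|$. That is one factor of $k$ worse than the stated bound, and the main obstacle is removing it. Two routes seem plausible.
\begin{itemize}
\item Use the $k^{-1}$ gain in the commutator: $[P_\theta,\psi]=k^{-1}\cdot(\text{$k^{-1}$-weighted first-order operator})$. Then $\|[P_\theta,\psi]u\|\leq Ck^{-1}\|\tilde\psi u\|_{H^1_k}\leq C\|f\|+Ck^{-N}\|u\|$ by Step 2, and the cross term becomes $C\|\chi R_P\chi\|\,\|f\|$.
\item The remaining standalone term $\|\psi u\|_{\cD}\leq Ck\|f\|$ would need $k\lesssim\|\chi R_P\chi\|$. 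This holds since the cutoff resolvent is bounded below by $ck$ for black-box problems; if needed, it can be checked via a quasimode of the free problem placed outside $Q_{\bb}$.
\end{itemize}
So the final estimate becomes $\|u\|_{\cD}\leq C\|\chi R_P\chi\|\,\|f\|$.

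\textbf{Step 3 (adjoint).} For $v\in\cH(Q_{\tr})$, run the same decomposition with $(P^D_\theta-1)^*$, using \eqref{e:propagate2} near $\partial Q_{\tr}$. In the interior use the adjoint version of Lemma \ref{l:inherit0}, i.e. boundedness of $((P_\theta-1)^*)^{-1}$ from $\cD^*\to\cH$, together with \eqref{e:adjoint}. This gives $\|v\|_{\cH}\leq C\|\chi R_P\chi\|\,\|(P^D_\theta-1)^*v\|_{\cD^*}$. Injectivity with closed range for both the operator and its adjoint gives existence of $(P^D_\theta-1)^{-1}:\cH(Q_{\tr})\to\cD(Q_{\tr})$ with the claimed bound.
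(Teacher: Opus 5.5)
Your proposal is correct and is essentially the paper's proof (your $\psi$ is the paper's $1-\chi$): the $k^{-1}$ gain in the commutator cancels the factor $k$ lost in Lemma \ref{l:propagate}, the standalone near-boundary term $Ck\|f\|$ is absorbed by a lower bound $\gtrsim k$ on the resolvent norm, and the adjoint is treated the same way via \eqref{e:propagate2}. The only differences are cosmetic: the paper applies the lower bound to $\|(P_\theta-1)^{-1}\|_{\cH\to\cD}$ and then invokes Lemma \ref{l:inherit0}, whereas you apply it directly to $\|\chi R_P\chi\|$; also, your two ``routes'' are not alternatives but are both needed, which is how you in fact combine them.
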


\bpf
Let $(P_\theta^D-1)u=g$ and $(P_\theta^D-1)^*v=g$.
Recall that an operator is invertible if and only if both the operator and its adjoint are bounded below. 
By this and
Lemma \ref{l:inherit0}, it is sufficient to prove that 
\beq\label{e:stp1}
\|u \|_{\cD(Q_{\tr})}\leq C \|(P_\theta-1)^{-1}\|_{\cH\to \cD}\|g\|_{\cH(Q_{\tr})}
\eeq
and 
\beq\label{e:stp2}
\|v \|_{\cH(Q_{\tr})}\leq C \|(P_\theta^*-1)^{-1}\|_{\cD^*\to \cH}\|g\|_{\cD^*(Q_{\tr})}.
\eeq
Let $U$ be as in Lemma \ref{l:propagate}  and let $\chi \in C^\infty_c(Q_{\tr})$ 
with $\supp \partial \chi \subset U$ and $\supp(1-\chi)\cap Q_{\bb}=\emptyset$.
By \eqref{e:propagate}, given $N>M+1$, there exists $C>0$ such that
\beq\label{e:add1}
\|(1-\chi)u\|_{\cD(Q_\tr)}\leq C \Big(k \|g\|_{\cH(Q_\tr)} + k^{-N} \| u\|_{\cH(Q_{\tr})}\Big).
\eeq
On the other hand, since $\chi \in C^\infty_c(Q_{\tr})$, 
$$
\chi u = (P_\theta-1)^{-1}\Big( \chi g + [-k^{-2}\Delta_\theta, \chi]u\Big)
$$
so that, since $\supp \partial \chi \subset U$, 
\begin{align}\nonumber
\|\chi u\|_{\cD} 
&\leq C \|(P_\theta-1)^{-1}\|_{\cH\to \cD}\Big( \| g\|_{\cH(Q_\tr)} + k^{-1} \| u\|_{H^1_k(U)}\Big) \\
&\leq C \|(P_\theta-1)^{-1}\|_{\cH\to \cD}\Big( \| g\|_{\cH(Q_\tr)} + k^{-N} \| u\|_{\cH(Q_{\tr})}\Big),\label{e:add2}
\end{align}
where we have used \eqref{e:propagate} again in the last line. 
By adding \eqref{e:add1} and \eqref{e:add2} and using that $\|(P_\theta-1)^{-1}\|_{\cH\to\cD}$ is both polynomially bounded and bounded below by $ck$ (see, e.g., \cite[Remark 3.4]{GLS2})
we obtain that 
$$
\|u \|_{\cH(Q_{\tr})}\leq C \|(P_\theta-1)^{-1}\|_{\cH\to \cD}\|g\|_{\cH(Q_{\tr})}.
$$
The bound \eqref{e:stp1}
then follows from
the definition of $\|\cdot\|_{\cD(Q_\tr)}$ \eqref{e:Dnorm}.

Repeating this argument for the adjoint problem using~\eqref{e:propagate2} instead of~\eqref{e:propagate} proves \eqref{e:stp2}, and the proof is complete.
\epf

\section{Proof of Theorem \ref{t:main}}

By Theorem \ref{t:agree}, it is sufficient to prove that, for $\chi\in C_c^\infty(Q_{\start})$ with 
 $\supp(1-\chi)\cap Q_{\bb}=\emptyset$, 
\beq\label{e:main2}
\big\|\chi \big((P^D_\theta-1)^{-1}-
(P_\theta-1)^{-1}
\big)\chi\big\|_{\cH\to \cD}\leq C\exp \big(- c k(\tan \theta)\min_i\{R_{\tr,i}-R_{\start,i}\}\big).
\eeq
Given $g\in L^2$ with $\supp g\subset \Omega_{\operatorname{tr}}$, let $u\in \cD$ solve 
$$
(P_{\theta}-1)u=\chi g.
$$
As in the proof of Theorem \ref{t:agree}, $u$ can be written as a solution to the free complex-scaled problem:~let $\phi\in C^\infty(\mathbb{R}^d;[0,1])$ be such that $\supp \phi\cap \supp (P_{\theta}-(-k^{-2}\Delta_{\theta}))=\emptyset$ and $\supp(\nabla\phi) \subset Q_{\start}$.
By \eqref{e:idea1}, 
\beq\label{e:idea2}
\phi u=(P_{\theta,\rm{free}}-1)^{-1}\big(\phi \chi g+[-k^{-2}\Delta_\theta,\phi]u\big).
\eeq
The idea of the proof is now to show that 
if $\psi$ is a cut-off function supported in $Q_{\tr}$ with $\psi\equiv 1$ except in a small neighborhood of $\partial Q_{\tr}$ then 
$\psi u$ is a good approximation to $(P^D_\theta-1)^{-1}\chi g$.
Furthermore, the commutator in this difference is supported near $\partial Q_{\tr}$, and thus the error decreases exponentially via 
the expression \eqref{e:idea1} and the bounds on $(P_{\theta,\rm{free}}-1)^{-1}$ in Lemma \ref{l:decay}.

In more detail, let $\psi\in C_c^\infty(Q_{\tr})$ with $\supp(1-\psi)\cap \supp \chi=\emptyset$ and 
$\supp (1-\psi)\cap 
\prod_{i=1}^d (-R_{\tr,i} +\e, R_{\tr,i}-\e)
=\emptyset$ and 
$\supp \partial\psi\cap \supp (1-\phi)=\emptyset$.
Then $\psi u$ satisfies $\psi u|_{\partial Q_{\tr}}=0$ and 
\begin{align*}
(P_\theta-1)\psi u&=\psi \chi g+[-k^{-2}\Delta_{\theta},\psi]u\\
&=\chi g+[-k^{-2}\Delta_{\theta},\psi]\phi u\\
&= \chi g+[-k^{-2}\Delta_{\theta},\psi](P_{\theta,\rm free}-1)^{-1}\big(\phi \chi g+[-k^{-2}\Delta_\theta,\phi]u\big).
\end{align*}
Let $\widetilde{\chi}\in C_c^\infty(\mathbb{R}^d)$ with $\supp \widetilde\chi\subset Q_{\start}$ with $\supp (1-\widetilde{\chi})\cap (\supp \chi \cup \supp \nabla \phi)=\emptyset$ and let $\widetilde{\phi}\in C_c^\infty(Q_{\tr};[0,1])$ with $\supp \widetilde\phi \cap \prod_{i=1}^d (-R_{\tr,i}+\epsilon,R_{\tr,i}-\epsilon)=\emptyset$ and $\supp (1-\widetilde{\phi})\cap \supp \nabla \psi=\emptyset$. Then
\begin{align*}
&(P^D_\theta-1)^{-1}\chi g-\psi u\\
&=-(P^{D}_\theta-1)^{-1}[-k^{-2}\Delta_{\theta},\psi](P_{\theta,\rm free}-1)^{-1}\big(\phi \chi g+[-k^{-2}\Delta_\theta,\phi]u\big)\\
&=-(P^{D}_\theta-1)^{-1}[-k^{-2}\Delta_{\theta},\psi]\widetilde{\phi}(P_{\theta,\rm free}-1)^{-1}\widetilde{\chi}\big(\phi \chi g+[-k^{-2}\Delta_\theta,\phi](P_\theta-1)^{-1}\chi g\big).
\end{align*}
By the combination of Lemmas \ref{l:inherit0} and \ref{l:inherit}, $(P^{D}_\theta-1)^{-1}$ is polynomially bounded. 
We now apply Lemma \ref{l:decay} to bound 
$\| \widetilde{\phi} (P_{\theta,\rm free}-1)^{-1}\widetilde{\chi} \|_{L^2 \to H_k^2}$. 
By the definitions of 
$\widetilde{\phi}$ and 
$\widetilde{\chi}$, for $\e>0$ small enough,
$${\rm dist}(\supp \widetilde{\phi}, \supp \widetilde{\chi})\geq \min_i\{ R_{\tr,i}-R_{\start,i}-\e\}\geq \frac{1}{2}\min_i\{R_{\tr,i}-R_{\start,i}\}.
$$
Therefore, by Lemma \ref{l:decay}, 
given $\delta,k_0>0$ there exist $C,\tilde{c}>0$ such that for all $\theta\in [\delta,\pi/2-\delta]$, and all $k\geq k_0$, 
$$
\big\| \widetilde{\phi} (P_{\theta,\rm free}-1)^{-1}\widetilde{\chi} \big\|_{L^2(\Rea^d) \to H_k^2(\Rea^d)} \leq C \exp\big(-\tilde{c}k\tan \theta\min_i \{R_{\tr,i}-R_{\start,i}\}\big)
$$
and the proof is complete.

\section*{Acknowledgements}

JG and ES were supported by  ERC Synergy Grant PSINumScat (101167139).
JG was additionally supported by EPSRC grants EP/V001760/1 and EP/V051636/1 and  Leverhulme Research Project Grant RPG-2023-325.

ChatGPT version 5.6 Sol 
was used in the writing of Lemma \ref{l:sneaky}, and version 6 Astra was used to proofread the document and create Figure \ref{fig:Martin_is_better_than_chatGPT}.

\bibliographystyle{alpha}
\bibliography{references.bib}

@book{DyZw:19,
  author    = {Dyatlov, S. and Zworski, M.},
  title     = {Mathematical Theory of Scattering Resonances},
  series    = {Graduate Studies in Mathematics},
  volume    = {200},
  publisher = {American Mathematical Society},
  address   = {Providence, RI},
  year      = {2019},
  doi       = {10.1090/gsm/200}
}

@article{GLS2,
  author  = {Galkowski, J. and Lafontaine, D. and Spence, E. A.},
  title   = {{Perfectly matched layer} truncation is exponentially accurate
             at high frequency},
  journal = {SIAM J. Math. Anal.},
  volume  = {55},
  number  = {4},
  pages   = {3344--3394},
  year    = {2023},
  doi     = {10.1137/21M1443716}
}

@article{BrPa:13,
  author  = {Bramble, J. H. and Pasciak, J. E.},
  title   = {Analysis of a Cartesian {PML} approximation to acoustic
             scattering problems in {$\mathbb R^2$} and {$\mathbb R^3$}},
  journal = {J. Comput. Appl. Math.},
  volume  = {247},
  pages   = {209--230},
  year    = {2013},
  doi     = {10.1016/j.cam.2012.12.022}
}

@misc{GGGLS2,
  author = {Galkowski, J. and Gong, S. and Graham, I. G. and
            Lafontaine, D. and Spence, E. A.},
  title  = {Convergence of overlapping domain decomposition methods with
            {PML} transmission conditions applied to nontrapping
            {Helmholtz} problems},
  year   = {2024},
  note   = {arXiv:2404.02156},
  doi    = {10.48550/arXiv.2404.02156}
}

@article{SjZw:91,
  author = {Sj\"{o}strand, J. and Zworski, M.},
  title = {Complex scaling and the distribution of scattering poles},
  journal = {J. Amer. Math. Soc.},
  fjournal = {Journal of the American Mathematical Society},
  volume = {4},
  year = {1991},
  number = {4},
  pages = {729--769},
  issn = {0894-0347},
  mrclass = {35P25 (35B20 58G25)},
  mrnumber = {1115789},
  mrreviewer = {George D. Raikov},
  doi = {10.2307/2939287},
  url = {https://doi.org/10.2307/2939287},
}

@article{HoScZs:03,
  title = {{Solving time-harmonic scattering problems based on the pole condition II: convergence of the PML method}},
  author = {Hohage, T. and Schmidt, F. and Zschiedrich, L.},
  journal = {SIAM Journal on Mathematical Analysis},
  volume = {35},
  number = {3},
  pages = {547--560},
  year = {2003},
  publisher = {SIAM},
}

@article{BrPa:07,
  title = {{Analysis of a finite PML approximation for the three dimensional time-harmonic Maxwell and acoustic scattering problems}},
  author = {Bramble, J. H. and Pasciak, J. E.},
  journal = {Mathematics of Computation},
  volume = {76},
  number = {258},
  pages = {597--614},
  year = {2007},
}

@article{LaSo:98,
  title = {{On the existence and convergence of the solution of PML equations}},
  author = {Lassas, M. and Somersalo, E.},
  journal = {Computing},
  volume = {60},
  number = {3},
  pages = {229--241},
  year = {1998},
  publisher = {Springer},
}

@article{LaSo:01,
  title = {{Analysis of the PML equations in general convex geometry}},
  author = {Lassas, M. and Somersalo, E.},
  journal = {Proceedings of the Royal Society of Edinburgh Section A: Mathematics},
  volume = {131},
  number = {5},
  pages = {1183--1207},
  year = {2001},
  publisher = {Royal Society of Edinburgh Scotland Foundation},
}

@article{Be:94,
  author = {J.-P. Berenger},
  title = {A perfectly matched layer for the absorption of electromagnetic waves},
  journal = {J. Comp. Phys.},
  year = {1994},
  volume = {114},
  pages = {185-200},
  month = {October},
  acmid = {195266},
  address = {San Diego, CA, USA},
  doi = {10.1006/jcph.1994.1159},
  issn = {0021-9991},
  issue = {2},
  numpages = {16},
  publisher = {Academic Press Professional, Inc.},
  url = {http://portal.acm.org/citation.cfm?id=195261.195266},
}

@article{KiPa:10,
  title={{Analysis of a Cartesian PML approximation to acoustic scattering problems in $\mathbb{R}^2$}},
  author={Kim, S. and Pasciak, J. E.},
  journal={Journal of Mathematical Analysis and Applications},
  volume={370},
  number={1},
  pages={168--186},
  year={2010},
  publisher={Elsevier}
}

@article{KiPa:10a,
  title={{Analysis of the spectrum of a Cartesian perfectly matched layer (PML) approximation to acoustic scattering problems}},
  author={Kim, S. and Pasciak, J. E.},
  journal={Journal of Mathematical Analysis and Applications},
  volume={361},
  number={2},
  pages={420--430},
  year={2010},
  publisher={Elsevier}
}

@article{ChXi:13,
  title = {{A source transfer domain decomposition method for Helmholtz equations in unbounded domain}},
  author = {Chen, Z. and Xiang, X.},
  journal = {SIAM Journal on Numerical Analysis},
  volume = {51},
  number = {4},
  pages = {2331--2356},
  year = {2013},
  publisher = {SIAM},
}

@article{wang2026wavenumber,
  title={Wavenumber-explicit stability and preasymptotic error analysis of UPML finite element method for obstacle scattering problems},
  author={Wang, Y. and Zheng, W.},
  journal={arXiv preprint arXiv:2607.20331},
  year={2026}
}

@article{LSW1,
  title = {For most frequencies, strong trapping has a weak effect in frequency-domain scattering},
  author = {Lafontaine, D. and Spence, E. A. and Wunsch, J.},
  journal = {Communications on Pure and Applied Mathematics},
  volume = {74},
  number = {10},
  pages = {2025-2063},
  year = {2021},
}

@book{Zw:12,
  title = {Semiclassical analysis},
  author = {Zworski, M.},
  year = {2012},
  publisher = {American Mathematical Society Providence, RI},
}

@phdthesis{Ki:09,
  title={Analysis of a PML method applied to computation of resonances in open systems and acoustic scattering problems},
  author={Kim, S.},
  year={2009},
  school={Texas A\&M University}
}

@misc{Di:26,
	Howpublished = {Digital Library of Mathematical Functions, \url{http://dlmf.nist.gov/}},
	Title = {{Digital Library of Mathematical Functions}},
	Author = {{NIST}},
	Year = 2026}

@book{Grafakos2008,
  author = {Grafakos, L.},
  title = {{Classical Fourier Analysis}},
  edition = {2},
  series = {Graduate Texts in Mathematics},
  volume = {249},
  publisher = {Springer},
  address = {New York},
  year = {2008},
  doi = {10.1007/978-0-387-09432-8},
}

@article{Va:04,
  title={Propagation of singularities for the wave equation on manifolds with corners},
  author={Vasy, A.},
  journal={S{\'e}minaire {\'E}quations aux d{\'e}riv{\'e}es partielles (Polytechnique)},
  pages={1--16},
  year={2004}
}

@article{GSAN,
  title = {{Numerical analysis of the high-frequency Helmholtz equation using semiclassical analysis}},
  author = {Galkowski, J. and Spence, E. A.},
  journal = {Acta Numerica},
  volume = {35},
  pages = {1--171},
  year = {2026},
  publisher = {Cambridge University Press},
}
\end{document}